\documentclass[12pt]{article}
\usepackage{amssymb, amsmath, amsthm, amsfonts}
\usepackage[italian, english]{babel}
\usepackage{graphicx}
\usepackage[T1]{fontenc}
\usepackage[latin1]{inputenc}
\usepackage{times}

\newtheorem{thm}{Theorem}[section]
\newtheorem{lemma}[thm]{Lemma}
\newtheorem{defn}[thm]{Definition}

\newtheorem{proposition}[thm]{Proposition}
\newtheorem{remark}[thm]{Remark}
\newtheorem{example}[thm]{Example}

    \newcommand{\ct}[1]{\langle {#1}\rangle \lower.3ex\hbox{$_{t}$}}
    \newcommand{\lt}[1]{[ {#1}] \lower.3ex\hbox{$_{t}$}}

\newcommand{\Part}{{\cal P}}
\newcommand{\R}{{\mathbb{R}}}
\newcommand{\N}{{\mathbb N}}

\newcommand{\C}{{\mathcal C}}

\newcommand{\K}{{\mathcal K}}
\newcommand{\sfe}{{\mathbb S}^{n-1}}

\newcommand{\Gr}{{\rm Gr}}

\newcommand{\cl}{{\rm cl}}
\newcommand{\interno}{{\rm int}}

\newcommand{\supp}{{\rm supp}}
\newcommand{\Val}{\mbox{\rm Val}}
\newcommand{\Vol}{{\rm Vol}}
\newcommand{\KL}{{\rm Kl}}
\newcommand{\rot}{{\bf O}}
\newcommand{\V}{{\rm V}}

\topmargin -1.1cm
\textwidth 17cm
\textheight 23.2cm
\oddsidemargin -0.4cm
\evensidemargin -0.4cm

\begin{document}
\selectlanguage{english}
\title{Translation invariant valuations 
\\
on quasi-concave functions}
\author{Andrea Colesanti, Nico Lombardi, Lukas Parapatits}
\date{}
\maketitle

\begin{abstract}
We study real-valued, continuous and translation invariant valuations defined on the space of quasi-concave functions of $N$ variables. In particular, we
prove a homogeneous decomposition theorem of McMullen type, and we find a representation formula for those
valuations which are $N$-homogeneous. Moreover, we introduce the notion of Klain's functions for these type of valuations.

\medskip

{\noindent 2010 AMS subject classification: 26B25 (52B45, 52A41)}
\end{abstract}

\section{Introduction}

In this paper we study real-valued, continuous and translation invariant valuations on the space of quasi-concave functions of $N$ variables.
In particular, we prove a homogeneous decomposition theorem, and we give a representation formula for those which are additionally 
$N$-homogeneous. 

\medskip

By a quasi-concave function it is generally meant a function having convex super-level sets. To be more specific,
in our notation $f\colon\R^N\to [0,\infty)$, $N\ge1$, is quasi-concave if, for every $t>0$, 
$$
L_t(f)=\{x\colon f(x)\ge t\}
$$
is, while not empty, a compact convex set (i.e.\ a convex body). We denote by $\C^N$ the space of functions with this property. 

A valuation on the space $\C^N$ is a functional $\mu\colon\C^N\to\R$ such that
$$
\mu(f\vee g)+\mu(f\wedge g)=\mu(f)+\mu(g)
$$
for every $f,g\in\C^N$ such that $f\vee g\in\C^N$ (note that $f\wedge g\in\C^n$ whenever $f,g\in\C^n$). Here $\vee$ and $\wedge$ denote the 
point-wise maximum and minimum operations, respectively. 

\medskip

The study of valuations on function spaces began quite recently, as a natural development of the theory of valuations defined on the space $\K^N$
of convex bodies of $\R^N$. We recall that a (real-valued) valuation on the space $\K^N$is a 
functional $\mu\colon\K^N\to\R$ such that the following finite additivity property is fulfilled: 
$$
\mu(K\cup L)+\mu(K\cap L)=\mu(K)+\mu(L),
$$
for every $K,L\in\K^N$ such that $K\cup L\in\K^N$. Hence, passing from sets to functions, in the definition of valuation the operations
of union and intersection are replaced by those of maximum and minimum. The theory of valuations on $\K^N$ is currently one of the
most active branches of convex geometry. 
A great impulse to the research in this area was given by deep and fundamental results like the Hadwiger classification theorem of rigid motion 
invariant valuations, and the McMullen decomposition theorem for translation invariant valuations. These breakthroughs motivated
many interesting developments and a finer and finer description of valuations on $\K^N$, subject to different invariance conditions. 
An updated and exhaustive survey on the state of the art in this area is contained in \cite[Chapter 6]{Schneider}. 

\medskip

Coming back to valuations on functions spaces, in this context as well one typically aims to give a complete classifications of valuations
defined on a given space of functions, having some invariance properties. The results achieved so far embrace various types 
of spaces, such as Lebesgue spaces, Sobolev spaces, functions of bounded variations, convex functions, etc. For more details 
the reader is referred to the papers: [\cite{BGW}-\cite{Colesanti-Lombardi},
\cite{Kone}-\cite{Ma}, \cite{Ober}, 
\cite{Tsang-2010}-\cite{Wright}]. 

\medskip

In the paper \cite{Colesanti-Lombardi}, the first two authors considered valuations $\mu$ on $C^N$ which are invariant with respect to the composition 
with rigid motions, i.e.
$$
\mu(f\circ T)=\mu(f)
$$
for every $f\in C^N$ and for every isometry $T$ (i.e. the composition of a rotation and a translation) of $\R^N$. Moreover, the continuity of $\mu$
was assumed with respect to a topology that will be used in the present paper as well and that we briefly recall. Let 
$f_i$, $i\in\N$, be a sequence in $\C^N$; we say that $f_i$ converges to $f\in\C^N$ if
$$
\lim_{i\to\infty}f_i(x)=f(x),\quad\forall\, x\in\R^N,
$$
and the sequence $f$ is either point-wise decreasing or increasing. A valuation on $\C^N$ is called continuous if it is continuous 
with respect to this type of convergence. 

Rigid motion invariant and continuous valuations on $\C^N$ were characterized in \cite{Colesanti-Lombardi} (see also 
Theorem \ref{characterization} of the present paper) by a result which closely resembles Hadwiger's characterization theorem 
for rigid motion invariant and continuous valuations on $\K^N$, and in fact its proof is deeply indebted to this theorem.

\medskip

Here we turn our attention to continuous valuations on $\C^N$ which are translation invariant only. In the corresponding situation for 
valuations on $K^N$, several important results are known. The celebrated McMullen decomposition theorem (see
\cite{McMullen}) asserts that any continuous and translation invariant valuation on $K^N$ can be written as the sum of $N+1$ homogeneous valuations,
with integer order of homogeneity (between $0$ and $N$). We find a counterpart of this theorem in $\C^N$. A valuation $\mu\colon \C^N\to\R$ is said to be
homogeneous of order $k\in\R$ if 
$$
\mu(f_\lambda)=\lambda^k\mu(f),\quad\forall\, f\in\C^N,\,\forall\,\lambda>0,
$$
where $f_\lambda$ is defined as follows
$$
f_\lambda(x)=f\left(\frac x\lambda\right),\quad x\in\R^N.
$$

\begin{thm}\label{teorema 1}
Let $\mu\colon{\cal C}^N\rightarrow \mathbb{R}$ be a continuous and translation invariant valuation.
Then, for every $k\in\{0,\dots,N\}$ there exist a continuous, translation invariant and $k$-homogeneous valuation $\mu_{k}$ such that
$$
\mu=\sum_{k=0}^{N}\mu_{k}.
$$
Moreover, the valuations $\mu_{k}$ are univocally determined.
\end{thm}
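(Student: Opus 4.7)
The plan is to adapt the classical proof of the McMullen decomposition on $\K^N$ by associating to $\mu$ a family of translation invariant valuations on $\K^N$. For each $t>0$, define
$$
\nu_t(K):=\mu(t\mathbf{1}_K),\qquad K\in\K^N.
$$
Since $t\mathbf{1}_{K\cup L}=(t\mathbf{1}_K)\vee(t\mathbf{1}_L)$ and $t\mathbf{1}_{K\cap L}=(t\mathbf{1}_K)\wedge(t\mathbf{1}_L)$ both lie in $\C^N$ whenever $K,L\in\K^N$, the valuation identity for $\mu$ passes to $\nu_t$, which is also translation invariant. Granting the Hausdorff continuity of $\nu_t$ on $\K^N$ (addressed in the last paragraph), the classical McMullen theorem yields
$$
\nu_t(\lambda K)=\sum_{k=0}^{N}\lambda^k\,\nu_t^{(k)}(K)\qquad\text{for every }K\in\K^N\text{ and }\lambda>0.
$$

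Fix now $f\in\C^N$ and set $\phi_f(\lambda):=\mu(f_\lambda)$. I would first treat \emph{simple} quasi-concave functions $f=\bigvee_{i=1}^{n} t_i\mathbf{1}_{K_i}$ with $0<t_1<\cdots<t_n$ and $K_1\supseteq\cdots\supseteq K_n$ in $\K^N$. The nesting ensures that for every $\emptyset\ne I\subseteq\{1,\ldots,n\}$ the meet $\bigwedge_{i\in I}t_i\mathbf{1}_{\lambda K_i}$ equals $t_{\min I}\mathbf{1}_{\lambda K_{\max I}}$ and again lies in $\C^N$, so the inclusion--exclusion identity (obtained by induction from the two-term defining relation of a valuation) gives
$$
\mu(f_\lambda)=\sum_{\emptyset\ne I\subseteq\{1,\ldots,n\}}(-1)^{|I|+1}\,\nu_{t_{\min I}}(\lambda K_{\max I}),
$$
and each summand is a polynomial in $\lambda$ of degree at most $N$ by the previous step. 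For an arbitrary $f\in\C^N$ I would approximate it monotonically from below by simple functions $f^{(j)}\nearrow f$ (e.g. by dyadic quantization of the values of $f$ combined with the family of super-level sets $L_t(f)$). Then $f^{(j)}_\lambda\nearrow f_\lambda$ pointwise for every $\lambda>0$, so continuity of $\mu$ gives $\mu(f^{(j)}_\lambda)\to\mu(f_\lambda)$. Since a pointwise limit of polynomials of degree at most $N$ is itself a polynomial of degree at most $N$ (by evaluating at $N+1$ distinct positive values of $\lambda$ and inverting the Vandermonde system), $\phi_f$ is such a polynomial.

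Writing $\phi_f(\lambda)=\sum_{k=0}^{N}\mu_k(f)\lambda^k$, the coefficients are obtained as fixed linear combinations of $\mu(f_{\lambda_0}),\ldots,\mu(f_{\lambda_N})$ for any $N+1$ distinct positive $\lambda_i$; from this representation each $\mu_k$ inherits directly from $\mu$ the properties of being a continuous, translation invariant valuation. The $k$-homogeneity of $\mu_k$ follows from the identity $(f_\nu)_\lambda=f_{\lambda\nu}$: expanding both sides yields $\sum_k\lambda^k\mu_k(f_\nu)=\sum_k(\lambda\nu)^k\mu_k(f)$, so $\mu_k(f_\nu)=\nu^k\mu_k(f)$. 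Uniqueness is immediate: any other decomposition $\mu=\sum_k\tilde\mu_k$ into $k$-homogeneous pieces produces, by the same scaling argument, the polynomial $\sum_k\lambda^k\tilde\mu_k(f)=\phi_f(\lambda)$, forcing $\tilde\mu_k=\mu_k$ by uniqueness of polynomial coefficients.

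The main technical obstacle I foresee is the Hausdorff continuity of $\nu_t$ on $\K^N$: continuity of $\mu$ is postulated only under monotone pointwise convergence, while a Hausdorff-convergent sequence of convex bodies $K_j\to K$ need not induce a monotone sequence of indicator functions $t\mathbf{1}_{K_j}$. A safe workaround is to carry out the simple-function argument only for simple functions whose super-level sets are \emph{polytopes}: there the McMullen polynomial expansion for $\nu_t$ is purely algebraic and requires no continuity, and every $f\in\C^N$ is reachable by monotone approximants with polytopal super-level sets (constructed by inner approximation of each $L_t(f)$ by a nested sequence of polytopes). Alternatively, the required Hausdorff continuity of $\nu_t$ can be recovered by sandwiching $K_j\to K$ between suitable monotone approximants (e.g.\ outer parallel bodies of $K$ and, when available, inner parallel bodies) and invoking the monotone-convergence continuity of $\mu$.
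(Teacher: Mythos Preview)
Your proposal is correct and follows essentially the same route as the paper: reduce to McMullen's polynomial expansion on polytopes (precisely to avoid the Hausdorff-continuity issue you flag), and define each $\mu_k$ as a fixed Vandermonde linear combination of the dilated valuations $f\mapsto\mu(f\circ g_j)$, from which continuity, translation invariance, and the valuation property are inherited.

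The only organizational difference is this: you prove directly, via inclusion--exclusion on simple functions and a monotone-limit-of-polynomials argument, that $\lambda\mapsto\mu(f_\lambda)$ is a polynomial of degree at most $N$ for \emph{every} $f\in\C^N$, and then read off homogeneity of $\mu_k$ from the identity $(f_\nu)_\lambda=f_{\lambda\nu}$. The paper instead isolates the approximation step as a separate lemma (Proposition~\ref{p1}: a continuous valuation vanishing on all $tI_P$, $P\in\Part^N$, is identically zero) and uses it twice---once to show $\mu_k(f\circ g_\lambda)-\lambda^k\mu_k(f)\equiv0$, and once to show $\mu-\sum_k\mu_k\equiv0$---without ever stating the polynomiality of $\phi_f$ for general $f$. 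Your version is more explicit about the underlying mechanism; the paper's version is slightly cleaner because Proposition~\ref{p1} absorbs the inclusion--exclusion and approximation into a reusable tool. One small point: in your inclusion--exclusion step, the induction from the two-term identity requires that all partial \emph{joins} $\bigvee_{i\in I}t_i\mathbf{1}_{\lambda K_i}$ lie in $\C^N$, not only the meets; this holds because the nesting of the $K_i$ makes every such join again a simple function, but it is worth stating.
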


As proved by Hadwiger in \cite{Hadwiger} (see also Theorem 6.4.3 in \cite{Schneider}), among homogeneous valuations on $\K^N$, those with order of 
homogeneity $N$ are of a very simple form, as each of them is just a multiple of the ordinary volume $V_N$ (the Lebesgue measure). Here we
also provide a more specific description of $N$-homogeneous valuations on $\C^N$. Let $f\in\C^N$; for every level $t>0$ we may consider the volume
of the corresponding super-level set of $f$:
$$
u_N(t):=V_N(\{x\colon f(x)\ge t\}).
$$
The function $u_N$ is decreasing; let us denote by $S_N(f;\cdot)$ the non-negative measure representing the distributional derivative of $-u_N$. We establish 
the following result.

\begin{thm}\label{teorema 2}
Let $\mu$ be a continuous, translation invariant and $N$-homogeneous valuation on $\C^N$. There exists a unique function $c\colon[0,+\infty)\rightarrow \R$ such that 
\begin{equation}\label{intro8}
\mu(f)=\int_{(0,+\infty)}c(t)dS_{N}(f;t)
\end{equation}
for all $f\in {\cal C}^N$. Moreover, $c$ is continuous, and there exists $\delta>0$ such that $c(t)=0$, for all $t\in [0,\delta]$.
\end{thm}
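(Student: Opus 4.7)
The plan is to identify the function $c$ first on the very special quasi-concave functions of the form $t\chi_K$ with $t>0$ and $K\in\K^N$, and then to propagate the resulting formula to arbitrary $f\in\C^N$ via a monotone approximation argument. For each fixed $t>0$ the map $\nu_t\colon K\mapsto\mu(t\chi_K)$ is a translation invariant, $N$-homogeneous valuation on $\K^N$: the valuation property comes from $(t\chi_K)\vee(t\chi_L)=t\chi_{K\cup L}$ and $(t\chi_K)\wedge(t\chi_L)=t\chi_{K\cap L}$, while the other two properties are inherited from $\mu$. Decreasing sequences $K_n\downarrow K$ in $\K^N$ give $t\chi_{K_n}\downarrow t\chi_K$ pointwise and monotonically, providing enough regularity (together with the classical fact that an $N$-homogeneous, translation invariant, continuous valuation on $\K^N$ is a multiple of $V_N$) to conclude $\nu_t(K)=c(t)V_N(K)$ for a uniquely determined $c(t)\in\R$. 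Applying the monotone continuity of $\mu$ to $t_n\chi_K\to t\chi_K$ for any fixed $K$ with $V_N(K)>0$ gives that $c$ is continuous on $(0,+\infty)$ and, using $\mu(0)=0$ (which follows from $N$-homogeneity applied to the zero function), that $c(t)\to 0$ as $t\to 0^+$.

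Next I would prove the integral formula on the \emph{simple} functions
$$
f_m=\bigvee_{n=1}^{m}t_n\chi_{K_n},\qquad t_1>t_2>\dots>t_m>0,\quad K_1\subsetneq K_2\subsetneq\dots\subsetneq K_m,
$$
all of which belong to $\C^N$ because their super-level sets are precisely the $K_n$. A pointwise check gives $f_{N-1}\wedge(t_N\chi_{K_N})=t_N\chi_{K_{N-1}}$, so the valuation identity applied to $f_N=f_{N-1}\vee(t_N\chi_{K_N})$ produces the recursion $\mu(f_N)=\mu(f_{N-1})+c(t_N)(V_N(K_N)-V_N(K_{N-1}))$ and, by induction,
$$
\mu(f_m)=\sum_{n=1}^{m}c(t_n)\bigl(V_N(K_n)-V_N(K_{n-1})\bigr),
$$
which equals $\int c\,dS_N(f_m;\cdot)$ because $S_N(f_m;\cdot)$ is precisely the corresponding sum of Dirac masses at the $t_n$'s.

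The eventual vanishing of $c$ on some $[0,\delta]$ is, in my view, the most delicate point, and I would prove it by contradiction using the simple-function formula. If $c$ were not identically zero on any right neighbourhood of $0$, one could pick $t_n\downarrow 0$ with $c(t_n)\neq 0$, all of the same sign (pigeonhole), and then select nested balls $K_n$ with $V_N(K_n)-V_N(K_{n-1})=1/|c(t_n)|$. The function $f=\bigvee_{n=1}^{\infty}t_n\chi_{K_n}$ lies in $\C^N$, the truncations $f_N$ increase pointwise to $f$, and the monotone continuity of $\mu$ forces $\mu(f_N)\to\mu(f)\in\R$; on the other hand the simple-function formula gives $|\mu(f_N)|=N\to\infty$, a contradiction. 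Hence $c\equiv 0$ on some $[0,\delta]$.

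Finally, for an arbitrary $f\in\C^N$ the integral formula follows by two monotone approximations: the truncations $\min(f,M)\uparrow f$ reduce to the case $\sup f<\infty$, after which the dyadic discretisations $f_n(x)=2^{-n}\lfloor 2^n f(x)\rfloor$ are simple functions of the above type that increase to $f$ and whose measures $S_N(f_n;\cdot)$ weakly approximate $S_N(f;\cdot)$ on $[\delta,M]$; the continuity of $c$ together with its vanishing on $[0,\delta]$ then yields $\int c\,dS_N(f_n;\cdot)\to\int c\,dS_N(f;\cdot)$, while $\mu(f_n)\to\mu(f)$ by monotone continuity of $\mu$. Uniqueness of $c$ is immediate from the identity $\mu(t\chi_K)=c(t)V_N(K)$ for any fixed $K$ with $V_N(K)>0$, combined with $c(0)=0$.
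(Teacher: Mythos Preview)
Your argument is correct and mirrors the paper's route: identify $c(t)$ from $\mu(t\chi_K)=c(t)V_N(K)$ via the $N$-homogeneous Hadwiger theorem, extend to simple functions by induction on the valuation identity, force $c\equiv0$ near the origin by a blow-up contradiction (the paper defers this step to Lemma~6.5 of \cite{Colesanti-Lombardi}, and your explicit construction with nested balls of prescribed volume increments is exactly that idea), and finish by monotone dyadic approximation. Two minor remarks: your map $\nu_t$ is only continuous along \emph{decreasing} sequences, so the clean justification of $\nu_t=c(t)V_N$ is to invoke the polytope version Theorem~\ref{VTP} (which needs no continuity) and then pass to general $K$ by decreasing polytope approximation---this is what the paper effectively does as well; and every $f\in\C^N$ automatically satisfies $\max f<\infty$, so your preliminary truncation $\min(f,M)$ is superfluous.
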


As explained in Remark \ref{monotone case}, 
the representation formula \eqref{intro8} becomes more readable when $\mu$ is also assumed to be monotone, e.g. increasing:
$$
\mbox{$f\le g$ point-wise in $\R^N$ implies $\mu(f)\le \mu(g)$.}
$$
In this case there exists a non-negative Radon measure $\nu$, whose support is contained in $[\delta,\infty)$ for some $\delta>0$, such that
$$
\mu(f)=\int_0^\infty V_N(\{x\colon f(x)\ge t\})\,d\nu(t),\quad\forall\, f\in\C^N.
$$
In other words, $\mu(f)$ is a weighted mean of the volumes of the super-level sets of $f$.

\medskip

In the last section we introduce the notion of Klain function for valuations on $C^N$. Once more, the idea originates from
a corresponding notion in convex geometry. The Klain function $\KL_\mu$ associated to a continuous, translation 
invariant and $k$-homogeneous valuation $\mu$ on $\K^N$ was introduced in \cite{Klain2}. The definition of $\KL_\mu$ is based on the fact that
the restriction of $\mu$ to convex bodies contained in a $k$-dimensional subspace $E$ of $\R^N$ is a multiple with factor $c_E$ of the $k$-dimensional volume;
this is a consequence of the theorem of Hadwiger mentioned before. $\KL_\mu\colon\Gr(N,k)\,\to\R$ is defined by 
$$
\KL_\mu(E)=c_E,
$$
where $\Gr(N,k)$ is the Grassmannian of the $k$-dimensional subspaces of $\R^N$. 
One of the main results concerning this function is that $\KL_\mu$ determines $\mu$ uniquely if in addition $\mu$ is even. In other words the application
$$
\mu\ \rightarrow\ \KL_\mu
$$
is injective in the space of even, continuous, translation invariant and $k$-homogeneous valuations on $\K^N$ (see \cite{Klain2} and 
\cite[Theorem 6.4.11]{Schneider}).

\medskip

We follow a similar path to define the Klain function of a $k$-homogeneous,
translation invariant and continuous valuation $\mu$ on $\C^N$, $k\in\{0,\dots, N\}$.
We fix $E\in\Gr(N,k)$ and we consider the restriction of $\mu$ to the set of all quasi-concave functions with $\supp(f)\subseteq E$ 
(where $\supp(f)$ is the support of the function $f$, which is a convex subset of $\R^N$). Such a restriction can be identified with a $k$-homogeneous,
translation invariant and continuous valuation on $\C^k$. Applying Theorem $\ref{teorema 2}$,
we deduce that there exists a function $c_E=c_E(t)$, depending on $E$ and defined for $t\geq 0$ such that
\begin{equation}\label{0.8}
\mu(f)=\int_{0}^{+\infty}c_{E}(t)dS_{k}(f;t)
\end{equation}
for all $f\in \C^N$ with support contained in $E$ (the measure $S_k$ is defined in a similar way to $S_N$). 
Hence we define the Klain function of $\mu$ as the function 
$$
\KL_{\mu}\colon(0,+\infty)\times \Gr(N,k)\rightarrow\mathbb{R},\ \KL_{\mu}(t,E)=c_{E}(t).
$$
We will see that this function, as in the case of convex bodies, determine $\mu$ uniquely if additionally $\mu$ is assumed to be even.

\medskip

Finally let us suppose that the valuation $\mu$ is also invariant w.r.t.\ the group $\rot(N)$ of rotations of $\R^N$. 
In this case the Klain function does not depend on $E$: 
$$
\KL_{\mu}(t,E)=\varphi(t)
$$
for some function $\varphi$ of one real variable (depending on $\mu$). With this assumption we are able to prove that 
$$
\mu(f)=\int_{0}^{+\infty}\varphi(t)dS_k(f;t)
$$ 
for all $f\in \C^N$. In this way we retrieve the characterization result proved in 
\cite{Colesanti-Lombardi}.

\bigskip

\noindent
{\bf Acknowledgements.}
The work of the third author was supported by the ETH Zurich Postdoctoral Fellowship Program and the Marie Curie Actions for People COFUND Program. The third author wants to especially thank Michael Eichmair and Horst Kn\"orrer for being his mentors at ETH Zurich.

\section{Background}

\subsection{Notation}

We work in the $N$-dimensional Euclidean space $\mathbb{R}^{N}$, $N\ge1$, endowed with the usual scalar product $(\cdot,\cdot)$ and norm $\|\cdot\|$. 
Given a subset $A$ of $\mathbb{R}^{N}$, $\interno(A)$, $\cl(A)$ and $\partial A$ denote the interior, the closure and the topological boundary 
of $A$, respectively. 

For every $x\in\R^N$ and $r\ge0$, $B_r(x)$ is the closed ball of radius $r$ centered at $x$; for simplicity
we will write $B_r$ instead of $B_r(0)$.  

By a rotation of $\R^N$ we mean a linear transformation of $\R^N$ associated to a matrix in $\rot(N)$. 
A {\em rigid motion} of $\R^N$ is the composition of a translation and a rotation of $\R^N$. 

The Lebesgue measure in $\mathbb{R}^{N}$ will be denoted by $\Vol_{N}$.

Given two functions $f,g\colon\R^N\to\R^N$, we set
$$
(f\wedge g)(x)=\min\{f(x),g(x)\},\quad
(f\vee g)(x)=\max\{f(x), g(x)\},\quad\forall\, x\in\R^N.
$$

\subsection{Convex bodies}

We denote by $\K^N$ the set of all convex bodies, i.e.\ compact and convex subsets of $\R^N$. Our reference book for the theory of convex bodies is the monograph 
$\cite{Schneider}$. 

We define the dimension of $K\in \K^N$ as the minimum natural number $m$ such that there exists an $m$-dimensional affine subspace of 
$\R^N$, containing $K$. 

$\K^N$ is a complete metric space with respect to the Hausdorff distance, defined as follows: 
$$
\delta(K,L)=\max\{\sup_{x\in K}{\rm dist}(x,L),\sup_{y\in L}{\rm dist}(K,y)\},\ \forall K,L\in K^N.
$$
The Minkowski sum of two convex bodies $K$ and $L$ is 
$$
K+L:=\{x+y:x\in K, y\in L \};
$$
the multiplication of a convex body $K$ by a non-negative  real $s$ is:
$$
s\, K:=\{sx\,:\,x\in K\}.
$$

A fundamental tool for the study of convex bodies is the support function. For a fixed convex body $K$, we define 
$h_{K}\colon\sfe\rightarrow \R$ by
$$
h_{K}(u)=\max_{x\in K}(u,x).
$$
The 1-homogeneous extension of the support function of a convex body is a convex function. Vice versa, if $h:\sfe\rightarrow \R^N$ has a 1-homogeneous 
extension on $\R^N$ that is convex, then there exists a unique convex body $K$ such that $h=h_{K}$. In particular, the support function $h_{K}$ identifies $K$.

Two useful properties of support functions are listed below
\begin{itemize} 
\item $K\subseteq L$ if and only if $h_{K}\leq h_{L}$, point-wise on $\sfe$;
\item
$$
h_{sK+tL}=sh_{K}+th_{L},\quad\forall\ K,L\in \K^N,\ \forall\ s,t\ge0.
$$
\end{itemize}

By its convexity, the support function is continuous on $\sfe$; we can evaluate the Hausdorff distance between two convex bodies  
in terms of their support functions. In fact, it holds that
$$
\delta(K,L)=||h_{K}-h_{L}||_{\infty}
$$
for all $K,L\in \K^N$, where $||.||_{\infty}$ is the supremum norm of continuous function on $\sfe$.

Throughout this paper we will frequently use the notion of intrinsic volumes for which we refer to \cite[Chapter 4]{Schneider}. Following the
standard notation, the intrinsic volumes of a convex body $K$ will be denoted by $V_i(K)$, $i=0,\dots,N$. Note that, in particular,
$V_N(K)=\Vol_N(K)$ for every $K\in K^N$.

If $P$ is a subset of $\R^N$, then $P$ is said to be a polytope if it is the convex hull of finitely many points in $\R^N$. 
We will denote by $\Part^N$ the set of all polytopes in $\R^N$. Clearly, $\Part^N\subseteq \K^N$; moreover, the closure of 
$\Part^N$ with respect to the Hausdorff distance is $\K^N$. In other words, for every $K\in \K^N$, there exists a sequence of polytopes $\{P_{j}\}_{j\in \N}$ 
such that $P_{j}\rightarrow K$ with respect to $\delta$, as $j\rightarrow+\infty$ (see for instance \cite[Chapter 1]{Schneider}).

A convex body $Z$ is said to be a zonotope if it is the Minkowski sum of finitely many segments. 
A convex body $Y$ is said to be a zonoid if it is the limit (w.r.t.\ the Hausdorff metric) of a sequence of zonotopes.

Finally, we denote with $\Gr(N,k)$ the Grassmannian of all $k$-dimensional vector subspaces of $\R^N$; this is a compact metric space with respect to the distance
$$
d(E,F)=\delta(E\cap B_1,F\cap B_1),\ \forall E,F\in \Gr(N,k).
$$

\subsection{Characterization results for valuations on $\Val(K^N)$}

In this section we briefly recall some results concerning valuations on convex bodies, that will be used in the sequel.
We begin with some definitions and properties about valuations on $\K^n$.

\begin{defn}
A functional $\mu\colon\K^N \rightarrow \R$ is called a {\em valuation} if
$$\mu(H\cup K)+\mu(H\cap K)=\mu(H)+\mu(K)$$ 
for all $H,K\in \K^N$ such that $H\cup K\in \K^N$.
We also set $\mu(\varnothing)=0$.
\end{defn}

\begin{remark}
\textnormal{We will consider valuations defined not necessarily on $\K^N$, but also on some of its subsets, like $\Part^N$.}

\end{remark}

Valuations that are continuous with respect to the Hausdorff metric and rigid motion invariant are completely characterized
by the following celebrated theorem of Hadwiger (see \cite{Hadwiger}, \cite{Klain-Rota} and \cite{Schneider} for the proof).

\begin{thm}[Hadwiger]\label{Hadwiger}
The functional $\mu\colon\K^N\ \rightarrow\ \R$ is a continuous and rigid motion invariant valuation if and only if there exist $(N+1)$ real coefficients $c_{0},...,c_{N}$ such that
$$
\mu(K)=\sum_{i=0}^{N}c_{i}V_{i}(K)
$$
for all $K\in{\cal K}^{N}$, where $V_{i}$ are the intrinsic volumes.
\end{thm}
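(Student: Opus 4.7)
The plan is to prove Hadwiger's theorem by induction on the dimension $N$. One direction is immediate: each intrinsic volume $V_i$ is known to be a continuous and rigid motion invariant valuation, so any linear combination $\sum c_i V_i$ has the same properties. All the work lies in the converse, namely recovering the coefficients $c_0,\dots,c_N$ from an abstract $\mu$.

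For the base case $N=1$, a convex body is an interval $[a,b]$, and translation invariance gives $\mu([a,b]) = F(b-a)$ for some function $F\colon[0,\infty)\to\R$. Applying the valuation property to $[a,b]\cup[b,c]$, which meet in the single point $\{b\}$, yields
\[
F(\ell_1+\ell_2) + F(0) = F(\ell_1) + F(\ell_2),
\]
so $F - F(0)$ satisfies Cauchy's functional equation. Continuity forces $F(\ell) = c_0 + c_1\ell$, which is exactly $c_0 V_0([a,b]) + c_1 V_1([a,b])$.

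For the inductive step, assume the result in dimension $N-1$. For each $(N-1)$-dimensional affine subspace $E\subset\R^N$, the restriction of $\mu$ to convex bodies contained in $E$ is a continuous and (relative) rigid motion invariant valuation, so by induction it is a linear combination $\sum_{i=0}^{N-1} c_i^E V_i$. Rigid motion invariance of $\mu$ makes these coefficients independent of $E$, giving universal constants $c_0,\dots,c_{N-1}$. Setting
\[
\tilde\mu = \mu - \sum_{i=0}^{N-1} c_i V_i,
\]
one obtains a continuous, rigid motion invariant, \emph{simple} valuation (i.e.\ vanishing on convex bodies of dimension less than $N$), and the theorem reduces to showing that $\tilde\mu = c_N V_N$.

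The volume theorem for simple valuations is the heart of the argument and the main obstacle. The strategy is: (i) using translation invariance, simplicity, and inclusion-exclusion on axis-aligned boxes built from rational side lengths, derive a functional equation whose only continuous solution vanishing on degenerate boxes is $c_N \Vol_N$; (ii) dissect parallelepipeds and simplices into congruent pieces and use rotation invariance together with the valuation identity to show that $\tilde\mu$ agrees with $c_N V_N$ on every simplex, and hence, by triangulation, on every polytope; (iii) extend to arbitrary $K \in \K^N$ by Hausdorff approximation with polytopes and continuity of both $\tilde\mu$ and $V_N$. The delicate point is step (ii), where one must show that a simple valuation is determined on all simplices by its value on a single normalized one; this is Hadwiger's classical simplex argument, and it is the step that truly uses full rigid motion invariance rather than mere translation invariance.
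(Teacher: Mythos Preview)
The paper does not actually prove Hadwiger's theorem; it is stated as background and the proof is deferred to \cite{Hadwiger}, \cite{Klain-Rota} and \cite{Schneider}. So there is no in-paper argument to compare your proposal against.

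Your outline follows the classical route (induction on the dimension, reduction to a simple valuation, then the volume theorem), and the first two stages are handled correctly: the $N=1$ case via Cauchy's equation and the reduction to a simple $\tilde\mu$ by subtracting off the lower-order intrinsic volumes are both fine. The real content, as you yourself flag, is the volume theorem $\tilde\mu = c_N V_N$. Your step~(i) on boxes is sound, but step~(ii) is only a gesture: ``Hadwiger's classical simplex argument'' is exactly the nontrivial input you are citing rather than supplying. If you want a self-contained proof you need either Hadwiger's dissection argument in full, or Klain's route via the characterization of simple even valuations on zonoids (which is closer in spirit to what this paper uses elsewhere, cf.\ Theorems~\ref{TK} and~\ref{TK.2}). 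As written, your proposal is a correct roadmap but not a complete proof, and since the paper gives no proof at all, there is nothing further to compare.
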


Note that, conversely, every linear combination of the intrinsic volumes is a continuous and rigid motion invariant valuation on $\K^N$.

If we add the simplicity hypothesis for a valuation $\mu$, i.e.\ $\mu(K)=0$ if $K$ is a convex body with $\dim(K)<N$, 
then we get the following characterization theorem (see \cite{Klain-Rota} and \cite{Schneider}).

\begin{thm}[Volume Theorem]\label{VT}
A continuous and rigid motion invariant valuation $\mu$ is simple if and only if there exists a real constant $c\in \R$ such that
$$
\mu=c \V_{N}.
$$
\end{thm}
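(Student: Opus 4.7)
The plan is to leverage Hadwiger's theorem (Theorem \ref{Hadwiger}) stated just above. Since $\mu$ is continuous and rigid motion invariant, Hadwiger gives real coefficients $c_{0},\dots,c_{N}$ with
\[
\mu(K)=\sum_{i=0}^{N}c_{i}V_{i}(K),\qquad\forall\,K\in\K^{N}.
\]
All the remaining work lies in using simplicity to force $c_{0}=c_{1}=\cdots=c_{N-1}=0$, after which the theorem follows with $c=c_{N}$.

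For this, I would proceed by induction on $j\in\{0,1,\dots,N-1\}$, testing $\mu$ against a $j$-dimensional convex body $K_{j}$ (for concreteness, a $j$-dimensional unit cube embedded in $\R^{N}$). Since $\dim(K_{j})=j<N$, simplicity yields $\mu(K_{j})=0$. Using the standard fact that $V_{i}(K_{j})=0$ whenever $i>\dim(K_{j})$, the Hadwiger expansion collapses to
\[
\sum_{i=0}^{j}c_{i}V_{i}(K_{j})=0.
\]
At the inductive step $j$, the coefficients $c_{0},\dots,c_{j-1}$ have already been shown to vanish, so the relation reduces to $c_{j}V_{j}(K_{j})=0$. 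Since $V_{j}(K_{j})>0$ for any non-degenerate $j$-dimensional body, we conclude $c_{j}=0$, completing the induction.

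The converse direction is immediate: $V_{N}$ coincides with the Lebesgue measure on $\K^{N}$, hence vanishes on any convex body of dimension strictly less than $N$, and is classically a continuous, rigid motion invariant valuation; any scalar multiple $c\,V_{N}$ inherits these properties.

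I do not foresee a genuine obstacle, since the substantive content has been delegated to Hadwiger's theorem and simplicity enters only as a finite system of linear conditions on the coefficients $c_{i}$. The only care-requiring point is picking the test bodies $K_{j}$ so that $V_{j}(K_{j})\neq 0$, which one can handle uniformly by selecting, for instance, a fixed nested sequence of faces of the unit cube in $\R^{N}$.
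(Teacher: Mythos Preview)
Your argument is correct and matches the paper's framing: the paper does not give its own proof of Theorem~\ref{VT} but positions it immediately after Hadwiger's Theorem~\ref{Hadwiger} as the consequence obtained by ``add[ing] the simplicity hypothesis'', referring to \cite{Klain-Rota} and \cite{Schneider} for details. Deriving the Volume Theorem from Hadwiger by testing on lower-dimensional bodies to kill the coefficients $c_0,\dots,c_{N-1}$ is exactly the route the paper has in mind.

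One minor remark: in the cited references the logical dependence sometimes runs the other way---Klain's short proof of Hadwiger's theorem uses a version of the Volume Theorem as input---so in those sources one finds direct arguments for Theorem~\ref{VT} that do not assume Theorem~\ref{Hadwiger}. But given that this paper takes Hadwiger's theorem as a black box stated first, your derivation is entirely appropriate and there is no circularity within the paper's own logical structure.
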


Next, we consider valuations that are continuous and translation invariant; we denote the space of such valuations by $\Val(\K^N)$.

\begin{defn}
Fix $k\in \{0,...,N\}$. A valuation $\mu\in \Val(\K^N)$ is said to be $k$-homogeneous if 
$$
\mu(\lambda K)=\lambda^{k}\mu(K),\quad\forall\, K\in \K^N,\ \forall\, \lambda>0.
$$ 
We will write $\Val_{k}(\K^N)$ to denote the subset of $k$-homogeneous valuations in $\Val(\K^N)$.
\end{defn}

The celebrated McMullen decomposition theorem (see \cite[Chapter 6]{Schneider}) establishes, roughly speaking, that any element of $\Val(\K^n)$ can be decomposed into a
sum of homogeneous valuations of integer degree. We start with the version of this result which concerns polytopes. 

We first recall that a valuation $\mu$ is {\em rational homogeneous} 
of degree $k$ if $\mu(\lambda K)=\lambda^{k}\mu(K)$ for all $K\in \K^N$ and $\lambda>0$ a rational number.

\begin{thm}[McMullen]\label{MP}
If $\mu\colon\Part^N\rightarrow \R$ is a traslation invariant valuation, then there exist $(N+1)$ translation invariant valuations $\mu_{0},...,\mu_{N}$ on $\Part^N$ such that, for 
every $k=0,\dots,N$, $\mu_{k}$ is rational homogeneous of degree $k$ and 
$$
\mu=\sum_{k=0}^{N}\mu_{k}.
$$
\end{thm}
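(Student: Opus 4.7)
The plan is to reduce Theorem \ref{MP} to the following polynomiality claim: for every $P\in\Part^N$, the function $\phi_P\colon\mathbb{N}\to\mathbb{R}$ given by $\phi_P(n)=\mu(nP)$ agrees on $\mathbb{N}$ with a polynomial in $n$ of degree at most $N$. Once this is in place, for each $P$ I write $\phi_P(n)=\sum_{k=0}^N \alpha_k(P)\,n^k$, and by Lagrange interpolation the coefficients $\alpha_k(P)$ are fixed linear combinations of the sample values $\mu(0\cdot P),\mu(P),\dots,\mu(NP)$. Defining $\mu_k:=\alpha_k$, the facts that $n(P\cup Q)=nP\cup nQ$ and $n(P\cap Q)=nP\cap nQ$ (whenever $P\cup Q\in\Part^N$), together with the commutation of dilation and translation, show that each $\mu_k$ inherits the valuation identity and translation invariance from $\mu$ through the fixed linear combination defining it. The required sum formula is just $\mu(P)=\phi_P(1)=\sum_k\alpha_k(P)$. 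Rational $k$-homogeneity follows from the polynomial identity $\phi_{mP}(n)=\phi_P(mn)$, valid for every positive integer $m$; comparing coefficients in $n$ yields $\mu_k(mP)=m^k\mu_k(P)$, and the case $\lambda=p/q$ is then obtained by applying this relation to $P/q$ with multiplier $p$.

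To prove the polynomiality claim I would induct on $N$. For $N=1$, translation invariance makes $\mu$ constant on singletons, equal to some $c_0$; the valuation identity applied to $[0,(n+1)s]=[0,ns]\cup[ns,(n+1)s]$, whose intersection is the singleton $\{ns\}$, together with translation invariance, gives
$$
\phi_{[0,s]}(n+1)-\phi_{[0,s]}(n)=\mu([0,s])-c_0,
$$
so $\phi_{[0,s]}$ is arithmetic in $n$, hence a polynomial of degree at most $1$. For the inductive step, if $\dim P<N$ I translate $P$ into an $(N-1)$-dimensional linear subspace and apply the inductive hypothesis in that subspace. If $\dim P=N$, I pick a facet $F$ of $P$ with outer normal $u$ and set up a decomposition of $(n+1)P$ as $nP$ glued to a translate, with intersection an $(N-1)$-dimensional polytope. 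The valuation identity then turns into a recursion for the first finite difference $\Delta\phi_P(n)=\phi_P(n+1)-\phi_P(n)$ whose right-hand side, by the inductive hypothesis on the lower-dimensional intersection piece, is a polynomial of degree at most $N-1$ in $n$. Summing this recursion forces $\phi_P$ to be a polynomial of degree at most $N$.

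I expect the main obstacle to be in organizing the inductive decomposition cleanly: the natural decomposition of $(n+1)P$ along a facet requires the intersection piece to be truly lower-dimensional and suitably related by translation to earlier pieces, which is automatic only for special polytopes such as products of segments. A robust alternative is to handle first the case of axis-aligned boxes, where tiling $[0,n]^N$ by unit cubes together with inclusion-exclusion over faces directly produces a degree-$N$ polynomial in $n$, and then extend to general polytopes via a dissection into boxes and the valuation identity. Either way, the absence of continuity at this stage forces one to work with arithmetic finite-difference arguments on $\mathbb{N}$ rather than passing to a real variable, which is exactly why the theorem is phrased for rational, not real, homogeneity.
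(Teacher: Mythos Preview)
The paper does not give a proof of Theorem~\ref{MP}; it is quoted as a known background result (attributed to McMullen), so there is nothing in the paper to compare your argument against. Let me assess the proposal on its own terms.

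Your global architecture is the standard one and is correct: once you know that $n\mapsto\mu(nP)$ agrees with a polynomial of degree at most $N$ on $\N$, the definition of $\mu_k$ by Lagrange interpolation, the verification that each $\mu_k$ is a translation invariant valuation (via $n(P\cup Q)=nP\cup nQ$, $n(P\cap Q)=nP\cap nQ$), and the passage from integer to rational homogeneity via $\phi_{mP}(n)=\phi_P(mn)$ all go through exactly as you say. The base case $N=1$ is also fine.

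The genuine gap is the polynomiality step in dimension $N\ge 2$, and neither of your two routes closes it. The decomposition ``$(n+1)P=nP$ glued to a translate along a facet, with $(N-1)$-dimensional intersection'' simply does not exist for a general full-dimensional polytope: already for a triangle $P\subset\R^2$ the set $(n+1)P\setminus nP$ is non-convex, and no single convex piece can be adjoined to $nP$ across a segment to produce $(n+1)P$. Your fallback, ``prove it for boxes and then dissect an arbitrary polytope into boxes'', fails for a more basic reason: a general polytope has no finite dissection into boxes (a triangle cannot be cut into finitely many rectangles).

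The standard repair is to replace boxes by simplices. Every polytope admits a triangulation, and for a simplex there \emph{is} a workable combinatorial scheme yielding polynomiality of $n\mapsto\mu(nT)$. But a triangulation is a non-convex union, so to transport polynomiality from simplices to $P$ you need full inclusion--exclusion, not just the two-set valuation identity for convex unions. That requires, as a separate preliminary step, the extension theorem (Volland/Groemer type) asserting that any valuation on $\Part^N$ extends to a finitely additive functional on the lattice of finite unions of polytopes. This extension is a non-trivial ingredient that your sketch omits; with it in hand, the induction on dimension via simplices can be carried out, and the rest of your outline then completes the proof.
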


We also have the following theorem.

\begin{thm}[Hadwiger]\label{VTP}
If $\mu\colon\Part^N\rightarrow \R$ is a translation invariant, $N$-homogeneous valuation, then there exists a real constant $c$ such that 
$$
\mu=c \V_{N}.
$$
\end{thm}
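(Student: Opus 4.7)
My plan is to first show that $\mu$ is \emph{simple}, i.e., $\mu(P)=0$ whenever $\dim(P)<N$, and then, using a simplicial dissection of a general polytope, reduce the theorem to proving $\mu(S)=c\,V_{N}(S)$ for every $N$-simplex $S$ and a constant $c$ independent of $S$.

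To establish simplicity, I would fix a linear subspace $E\subset\R^{N}$ of dimension $k<N$ and let $P\subset E$ be an arbitrary polytope. Since every translation of $E$ is a translation of $\R^{N}$, the restriction of $\mu$ to polytopes supported on $E$ is a translation invariant valuation on $\Part(E)$; identifying $E$ with $\R^{k}$ and applying Theorem \ref{MP} in dimension $k$ to this restriction, we obtain translation invariant valuations $\mu_{0}^{E},\dots,\mu_{k}^{E}$, with $\mu_{j}^{E}$ rational $j$-homogeneous, such that
$$
\mu(\lambda P)\;=\;\sum_{j=0}^{k}\lambda^{j}\,\mu_{j}^{E}(P)
\qquad\text{for every }\lambda\in\Q\cap(0,+\infty).
$$
On the other hand, the $N$-homogeneity of $\mu$ gives $\mu(\lambda P)=\lambda^{N}\mu(P)$ for every real $\lambda>0$, and in particular for every positive rational. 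Two polynomials in $\lambda$ agreeing on an infinite set coincide; comparing coefficients in the identity $\sum_{j=0}^{k}\lambda^{j}\mu_{j}^{E}(P)=\lambda^{N}\mu(P)$ forces $\mu(P)=0$, since the left-hand side has degree at most $k<N$. Hence $\mu$ vanishes on every polytope of dimension less than $N$.

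Next, every $P\in\Part^{N}$ admits a dissection into $N$-simplices $S_{1},\dots,S_{m}$ whose pairwise intersections have dimension strictly less than $N$. Iterating the valuation identity and invoking simplicity to annihilate all lower-dimensional overlap contributions, one obtains $\mu(P)=\sum_{i=1}^{m}\mu(S_{i})$ and analogously $V_{N}(P)=\sum_{i=1}^{m}V_{N}(S_{i})$. It therefore suffices to prove that $\mu(S)=c\,V_{N}(S)$ for every $N$-simplex $S$, with $c$ a constant independent of $S$. This simplex case is what I expect to be the main obstacle. One natural approach is to define $\phi(w_{1},\dots,w_{N}):=\mu(\conv(0,w_{1},\dots,w_{N}))$ (well-defined by translation invariance) and to use valuation identities arising from cutting $\conv(0,w_{1},\dots,w_{N})$ by hyperplanes through the origin, together with simplicity and $N$-homogeneity, to show that $\phi$ is proportional to $|\det(w_{1},\dots,w_{N})|$ and hence to $N!\,V_{N}(S)$. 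An equivalent, more combinatorial route is Hadwiger's original one: prove that any two $N$-simplices of equal $V_{N}$-volume are translation-equidissectable, so that their $\mu$-values must agree. The delicate point, given that translation invariance is our only invariance, is carrying out either of these simplex arguments without appealing to any further symmetry of $\mu$.
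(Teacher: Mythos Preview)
The paper does not supply a proof of this theorem; it is quoted as a classical background result due to Hadwiger (with a reference to \cite{Hadwiger} and to \cite{Schneider}). So there is no in-paper argument to compare against, and the question is whether your sketch stands on its own.

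Your simplicity step is correct and standard, and the reduction to simplices via triangulation together with the inclusion--exclusion/simplicity argument is fine. The genuine gap is exactly where you place it, but your second route is not merely incomplete, it is \emph{false}: two $N$-simplices of equal volume are in general \emph{not} translation-equidissectable. Already in dimension~$2$ the Hadwiger--Glur theorem provides, beyond area, translation-invariant edge functionals that distinguish generic triangles of the same area; so translation scissors-congruence alone cannot carry the argument. What Hadwiger actually proved is equidissectability under the group generated by translations \emph{and} point reflections $x\mapsto -x+c$. To use that for a valuation that is only translation invariant you must first establish $\mu(-P)=\mu(P)$ for every polytope~$P$; this ``automatic evenness'' is a genuine consequence of $N$-homogeneity together with the valuation property (for $N=2$ it falls out of dissecting $2T$ into three translates of $T$ and one translate of $-T$ and comparing with $\mu(2T)=4\mu(T)$; the higher-dimensional argument is more delicate). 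Your first route, showing $\phi\propto|\det|$ from hyperplane-cut identities, is too vague as written: the bisection relations one obtains are certainly satisfied by $|\det|$ but do not characterize it without some further input playing the role of evenness or of the parallelotope/shear-invariance argument. In short, the plan is sound up to the simplex case, but that case needs an actual argument (prove evenness from homogeneity, or run the parallelotope route and then pass to simplices), not an appeal to translation-equidissectability.
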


If we add the continuity assumption we may pass from $\Part^N$ to $\K^N$.

\begin{thm}[McMullen]
If $\mu\in \Val(\K^N)$, then there exist $\mu_{k}\in \Val_{k}(\K^N)$, $k\in \{0,\dots,N\}$, such that 
$$
\mu=\sum_{k=0}^{N}\mu_{k}.
$$
In particular,
$$
\mu(\lambda K)=\sum_{k=0}^{N}\lambda^{k}\mu_{k}(K)
$$
for all $K\in \K^N$ and for all $\lambda>0$.
\end{thm}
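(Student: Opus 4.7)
The plan is to reduce to the polytope version (Theorem \ref{MP}) and use continuity to upgrade rational homogeneity to full homogeneity. Fix $N+1$ distinct positive rational numbers $\lambda_0,\lambda_1,\dots,\lambda_N$; the associated Vandermonde matrix $V=(\lambda_j^k)_{k,j=0}^N$ is invertible, so let $(a_{kj})=V^{-1}$. For each $k\in\{0,\dots,N\}$ define
$$
\mu_k(K)=\sum_{j=0}^{N}a_{kj}\,\mu(\lambda_j K),\qquad K\in\K^N.
$$
Since $\mu\in\Val(\K^N)$ and $K\mapsto\lambda_j K$ is Hausdorff continuous, each $\mu_k$ is continuous; it is a valuation because $\lambda_j(K\cup L)=(\lambda_j K)\cup(\lambda_j L)$ and similarly for intersections, so the valuation identity for $\mu$ passes linearly to $\mu_k$; and it is translation invariant because $\lambda_j(K+x)=\lambda_j K+\lambda_j x$ and $\mu$ is translation invariant.

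Next I would show $\mu=\sum_{k=0}^N\mu_k$ on polytopes. By Theorem \ref{MP} applied to the translation invariant valuation $\mu|_{\Part^N}$, there exist valuations $\widetilde\mu_0,\dots,\widetilde\mu_N$ on $\Part^N$, with $\widetilde\mu_k$ rational homogeneous of degree $k$, such that $\mu(P)=\sum_k\widetilde\mu_k(P)$ for every $P\in\Part^N$. Applying this to $\lambda_j P$ for each $j$ yields the Vandermonde system $\mu(\lambda_j P)=\sum_k\lambda_j^k\widetilde\mu_k(P)$, which inverts to $\widetilde\mu_k(P)=\sum_j a_{kj}\mu(\lambda_j P)=\mu_k(P)$. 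Hence $\mu_k|_{\Part^N}=\widetilde\mu_k$, and in particular $\sum_k\mu_k(P)=\mu(P)$ for every polytope $P$. Since $\Part^N$ is Hausdorff-dense in $\K^N$ and both sides are continuous, the identity extends to all $K\in\K^N$.

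It remains to check that $\mu_k$ is $k$-homogeneous on $\K^N$. On polytopes this holds for rational $\lambda>0$ by Theorem \ref{MP}, since $\mu_k|_{\Part^N}=\widetilde\mu_k$. For an arbitrary $K\in\K^N$, approximate by polytopes $P_n\to K$ in the Hausdorff metric; then $\lambda P_n\to\lambda K$, so for rational $\lambda>0$ continuity gives
$$
\mu_k(\lambda K)=\lim_{n\to\infty}\mu_k(\lambda P_n)=\lim_{n\to\infty}\lambda^k\mu_k(P_n)=\lambda^k\mu_k(K).
$$
Finally, for arbitrary $\lambda>0$, density of the positive rationals together with the continuity of $\lambda\mapsto\lambda K$ (and thus of $\lambda\mapsto\mu_k(\lambda K)$) forces $\mu_k(\lambda K)=\lambda^k\mu_k(K)$ in full generality, so $\mu_k\in\Val_k(\K^N)$.

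The only real obstacle is the very last passage from rational to real homogeneity: we must know that $\lambda\mapsto\lambda K$ is continuous in the Hausdorff metric, which is standard, so the full statement follows. Uniqueness of the decomposition, if desired, is then an easy Vandermonde argument: evaluating a purported equality $\sum_k\nu_k=0$ at $\lambda K$ for $N+1$ distinct values of $\lambda$ and inverting forces each $\nu_k$ to vanish.
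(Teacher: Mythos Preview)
Your argument is correct. Note, however, that the paper does not supply its own proof of this statement: McMullen's theorem for $\K^N$ is quoted as a background result from the literature (see \cite{McMullen} and \cite[Chapter 6]{Schneider}), so there is no in-paper proof to compare against directly.

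That said, your Vandermonde-based strategy is exactly the template the paper adopts when it proves the analogous decomposition for $\C^N$ in Theorem \ref{t1}: fix the integer dilations $1,2,\dots,N+1$, invert the Vandermonde system to express the homogeneous components as explicit linear combinations of dilated evaluations (cf.\ \eqref{aggiunta1} and \eqref{aggiunta2}), check that these inherit continuity and translation invariance, and then use dilation continuity to pass from rational to real homogeneity. The only structural difference is that in $\C^N$ the paper replaces the density-of-polytopes step by Proposition \ref{p1} (vanishing on $tI_P$ forces vanishing everywhere), which plays the same ``determination on a dense class'' role that Hausdorff density of $\Part^N$ plays in your proof. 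So your approach and the paper's method for the functional analogue are essentially the same.
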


As a consequences of Hadwiger's theorem, we have the following characterization theorem.

\begin{thm}\label{TH}
If $\mu$ is a translation invariant, $N$-homogeneous and continuous valuation on $\K^N$, then there exists a real constant $c$, such that
$$
\mu=c \V_{N}
$$
in $\K^N$.
\end{thm}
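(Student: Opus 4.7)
The plan is to reduce Theorem~\ref{TH} to its polytope analogue, Theorem~\ref{VTP}, and then extend the resulting identity to all convex bodies by a standard density-and-continuity argument.

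First I would restrict $\mu$ to $\Part^N$. Translation invariance and $N$-homogeneity transfer trivially to any subclass closed under the relevant operations, and the valuation identity holds on $\Part^N$ since whenever $P,Q,P\cup Q\in\Part^N$ one also has $P\cap Q\in\Part^N$. Thus $\mu|_{\Part^N}$ is a translation invariant, $N$-homogeneous valuation on polytopes, and Theorem~\ref{VTP} produces a constant $c\in\R$ such that $\mu(P)=c\,\V_N(P)$ for every $P\in\Part^N$.

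Next I would propagate this equality to arbitrary $K\in\K^N$. By the density of $\Part^N$ in $\K^N$ recalled in Section~2.2, there is a sequence $P_j\in\Part^N$ with $P_j\to K$ in the Hausdorff metric. Since $\mu$ is continuous by hypothesis and $\V_N$ is continuous on $\K^N$ (a classical fact, and in any case a special case of Theorem~\ref{Hadwiger}), passing to the limit in $\mu(P_j)=c\,\V_N(P_j)$ yields $\mu(K)=c\,\V_N(K)$, which gives $\mu=c\,\V_N$ on $\K^N$ as claimed.

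I do not foresee any real obstacle: the whole argument is a one-step density extension, and every ingredient (the polytope result, the density of polytopes, the continuity of $\V_N$) is already available. The only point worth stating explicitly is that the constant $c$ is independent of $K$, which is automatic since it is produced once and for all by applying Theorem~\ref{VTP} to the restricted valuation on $\Part^N$ before any approximation takes place.
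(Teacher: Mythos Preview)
Your argument is correct and is exactly the route the paper has in mind: the paper states Theorem~\ref{TH} without a written proof, introducing it as a consequence of Hadwiger's result (Theorem~\ref{VTP}) on polytopes, and the passage from $\Part^N$ to $\K^N$ via density and continuity is the intended completion. There is nothing to add.
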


We are ready to define the Klain function. Let us fix $k\in\{1,\dots,N-1\}$ and $\mu\in \Val_{k}(\K^N)$. 
We consider $E\in \Gr(N,k)$; let us denote by $\K(E)$ the set of convex bodies contained in $E$. Clearly
$\K(E)$ can be identified with $\K^k$. The restriction of $\mu$ to $\K(E)$, denoted by $\mu|_{E}$, retains the 
properties of continuity, translation invariance and homogeneity of $\mu$. By Theorem \ref{TH}, there exists a constant, that
in general depends on $E$ and will therefore be denoted by $c_E$, such that
\begin{equation}
\mu|_{E}=c_{E} \Vol_{k}
\end{equation}
where $\Vol_{k}$ denotes the $k$-dimensional Lebesgue measure.

The previous equality in fact defines the Klain function $\KL_{\mu}$
$$
\KL_{\mu}\colon\Gr(N,k)\rightarrow \R,\quad
\KL_{\mu}(E)=c_{E},\quad\forall\ E\in\Gr(N,k).
$$
By the continuity of $\mu$, $\KL_{\mu}$ is continuous on $\Gr(N,k)$.

We conclude this section with the notion of an even valuation and a property of the Klain function with this additional hypothesis for the valuation.
\begin{defn}
A valuation $\mu\in \Val_{k}(\K^N)$ is even if $\mu(K)=\mu(-K)$ for all $K\in \K^N$. 
We will write $\Val^{+}_{k}(\K^N)$ to denote the subset of even valuations in $\Val_{k}(\K^N)$.
\end{defn}

\begin{thm}[Klain]\label{TK}
Let $\mu\in \Val^{+}(\K^N)$ be simple. Then there exists a constant $c\in \R$ such that
$$
\mu(K)=c \V_{N}(K),\quad\forall K\in\K^N.
$$
\end{thm}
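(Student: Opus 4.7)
The plan is to decompose $\mu$ via McMullen's theorem into homogeneous components, to handle the endpoints via Theorem \ref{TH} and simplicity, and to eliminate the intermediate components by showing their Klain functions vanish and invoking the injectivity of the Klain map on even valuations (recalled in the introduction).

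First I would apply McMullen's decomposition to write $\mu = \mu_0 + \mu_1 + \cdots + \mu_N$ with $\mu_k \in \Val_k(\K^N)$. Uniqueness of the decomposition applied to the valuation $K \mapsto \mu(-K) = \mu(K)$ (whose McMullen components are $K \mapsto \mu_k(-K)$) forces $\mu_k(-K) = \mu_k(K)$, so each $\mu_k$ is even. The simplicity of $\mu$ also passes to each component: for $K$ with $\dim K < N$,
\[
0 = \mu(\lambda K) = \sum_{k=0}^N \lambda^k \mu_k(K) \qquad (\lambda > 0)
\]
is a polynomial identity in $\lambda$, forcing $\mu_k(K) = 0$ for every $k$. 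Thus each $\mu_k$ lies in $\Val^+_k(\K^N)$ and is simple. The endpoints are then immediate: $\mu_0$ is a constant equal to $\mu_0(\{0\})$, which vanishes by simplicity, and Theorem \ref{TH} applied to $\mu_N$ gives $\mu_N = c\,\Vol_N$ for some $c \in \R$.

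For each $k$ with $1 \le k \le N - 1$, I would show that the Klain function $\KL_{\mu_k}$ is identically zero on $\Gr(N, k)$. Indeed, fix $E \in \Gr(N, k)$; every $K \in \K(E)$ satisfies $\dim K \le k < N$, so simplicity of $\mu_k$ yields $\mu_k(K) = 0$, and comparing with the defining relation $\mu_k|_{\K(E)} = \KL_{\mu_k}(E)\,\Vol_k$ gives $\KL_{\mu_k}(E) = 0$. The injectivity of the Klain map on $\Val^+_k(\K^N)$, recalled in the introduction, then forces $\mu_k \equiv 0$. Combining all degrees produces $\mu = c\,\Vol_N$, as claimed. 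The only nontrivial ingredient in this plan is the injectivity of the Klain map, which is however cited as an established fact; no genuine obstacle is anticipated, and the conceptual key is the observation that simplicity automatically annihilates the Klain function of every homogeneous component of strictly intermediate degree.
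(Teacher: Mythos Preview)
The paper does not supply a proof of Theorem~\ref{TK}; it is quoted as Klain's classical result, with the proof deferred to \cite{Schneider} and \cite{Klain-Rota} (the proof of the variant Theorem~\ref{TK.2} explicitly refers back to ``the same arguments to prove Klain's Theorem~\ref{TK}'' in those references). So there is no in-paper argument to compare against; I can only assess your proposal on its own merits.

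Your reduction via McMullen is clean and the passage of evenness and simplicity to each homogeneous component is correct, as are the treatments of $\mu_0$ and $\mu_N$. The real problem is the step for $1\le k\le N-1$: you invoke the injectivity of the Klain map on $\Val^{+}_{k}(\K^N)$ (Theorem~\ref{TU}) to conclude $\mu_k\equiv 0$ from $\KL_{\mu_k}\equiv 0$. But the standard proof of Theorem~\ref{TU}---the one in \cite{Schneider} and \cite{Klain2} that the paper is citing---proceeds exactly by restricting a valuation with vanishing Klain function to a $(k{+}1)$-dimensional subspace, observing that the restriction is simple and even there, and then applying Theorem~\ref{TK} in that subspace to force it to be a multiple of $\Vol_{k+1}$, hence zero by homogeneity; one then climbs up dimension by dimension. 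In other words, Theorem~\ref{TU} is a \emph{corollary} of Theorem~\ref{TK}, not an independent input. Using it here makes the argument circular.

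If you want a self-contained route, you must replace that step by the actual content of Klain's proof: show directly that a simple, even, continuous, translation invariant valuation vanishes on zonotopes (parallelotopes first, via translation invariance and evenness), extend by continuity to zonoids and generalized zonoids, and then use that generalized zonoids are dense among centrally symmetric bodies together with a symmetrization/simplex argument. That is the substance the paper is outsourcing to \cite{Schneider} and \cite{Klain-Rota}, and it is precisely what your appeal to Theorem~\ref{TU} is hiding.
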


We will need a variant of the previous result, where continuity is replaced by continuity with respect to monotone sequences.

\begin{thm}\label{TK.2}
Let $\mu$ be a simple, even and translation invariant valuation, continuous with respect to decreasing sequences. Then there exists a constant $c\in \R$  such that
\begin{equation}\label{TK.2 eqn}
\mu(K)=c \V_{N}(K),\quad \forall K\in\K^N.
\end{equation}
\end{thm}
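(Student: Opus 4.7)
The plan is to first establish the identity $\mu(P)=c\V_N(P)$ for every polytope $P\in\Part^N$ using the purely combinatorial tools of the McMullen decomposition on polytopes, and then to transfer this identity to an arbitrary $K\in\K^N$ by outer polytopal approximation together with the assumed continuity on decreasing sequences. In this way no full Hausdorff continuity of $\mu$ is ever needed: only the one-sided continuity along decreasing sequences is used, and exactly once, in the final approximation argument.

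To handle the polytope case, I would apply Theorem \ref{MP} to the restriction of $\mu$ to $\Part^N$ and write $\mu|_{\Part^N}=\mu_0+\mu_1+\cdots+\mu_N$, where each $\mu_k$ is translation invariant and rationally $k$-homogeneous. Uniqueness of the decomposition together with the hypotheses on $\mu$ forces every $\mu_k$ to inherit evenness and simplicity: if $\dim P<N$ then $\mu(\lambda P)=0$ for every rational $\lambda>0$, so the polynomial identity $\sum_k \lambda^k\mu_k(P)=0$ yields $\mu_k(P)=0$ for every $k$; the analogous identity with $P$ replaced by $-P$ produces $\mu_k(-P)=\mu_k(P)$. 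For $k=N$ the polytopal version of Hadwiger's volume theorem (Theorem \ref{VTP}) gives a constant $c$ with $\mu_N=c\V_N$ on $\Part^N$, so the real task reduces to showing $\mu_k\equiv0$ on $\Part^N$ for every $k<N$.

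For this last step one adapts the classical polytope proof of Klain's theorem: a simple translation invariant valuation on $\Part^N$ is determined by its values on full-dimensional simplices via simplicial dissection, and for such a simplex $S$ one exploits a suitable subdivision of a scaled copy of $S$. Under evenness and translation invariance the pieces of the dissection all contribute the same quantity $\mu_k(S)$ to $\mu_k$ (the lower-dimensional intersections are killed by simplicity), which together with the scaling relation $\mu_k(\lambda S)=\lambda^k\mu_k(S)$ forces $\mu_k(S)=0$ as soon as $k<N$. Hence $\mu_k\equiv0$ on $\Part^N$ for each $k<N$, and $\mu=c\V_N$ on $\Part^N$.

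Finally, given $K\in\K^N$, I would construct a decreasing sequence of polytopes $P_j\supseteq P_{j+1}\supseteq K$ with $P_j\to K$ in the Hausdorff metric (for instance by intersecting a fixed large cube containing $K$ with outer supporting half-spaces associated to a dense sequence of unit directions, adding one half-space at each step). The Hausdorff continuity of $\V_N$ and the assumed continuity of $\mu$ on decreasing sequences then give
\begin{equation*}
\mu(K)=\lim_{j\to\infty}\mu(P_j)=\lim_{j\to\infty}c\V_N(P_j)=c\V_N(K),
\end{equation*}
which yields \eqref{TK.2 eqn}. The main obstacle is the dissection step identifying the lower-degree components $\mu_k$: one has to choose the scaling and the way of subdividing so that \emph{evenness alone} (and not full rotational or affine symmetry) suffices to match the pieces, and this is the precise point at which the evenness hypothesis is essential and cannot be dropped.
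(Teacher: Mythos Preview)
Your approach differs from the paper's. The paper does not decompose $\mu$ via McMullen; it runs the standard proof of Klain's Theorem (as in \cite{Schneider} or \cite{Klain-Rota}) essentially unchanged, noting that the only place continuity enters is the passage from zonotopes to zonoids, and then constructs for each zonoid $Y$ a \emph{decreasing} sequence of zonotopes converging to $Y$ (by taking zonotope approximants to the enlarged zonoids $Y+2^{-k}B_1$).

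Your strategy has a genuine gap at the step you yourself flag as the ``main obstacle''. The simplex dissection works only for $N\le 2$: for $N=2$ the scaled triangle $2S$ indeed dissects into three translates of $S$ and one of $-S$, but for $N\ge 3$ the scaled simplex $mS$ cannot be cut into translates of $\pm S$ alone (after removing the corner copies of $S$ from $2S$ one is left with an octahedral region admitting no such decomposition). More decisively, the statement you need at that point --- that a simple, even, translation invariant valuation on $\Part^N$ is already a multiple of $\V_N$ with no continuity hypothesis --- is \emph{false} for $N\ge 3$. Real-valued Dehn-type invariants (compose the classical Dehn invariant with a $\Q$-linear functional into $\R$) are simple, translation invariant, $1$-homogeneous and even (edge lengths and dihedral angles are unchanged under $P\mapsto -P$), yet they are not multiples of volume. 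Hence the one-sided continuity cannot be deferred entirely to a final polytope-to-body approximation; it must already be invoked to kill the lower-degree pieces, and once you do this (parallelepipeds $\to$ zonotopes $\to$ zonoids $\to$ general bodies) you are back to the paper's argument.
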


\begin{proof}
We may use the same arguments to prove Klain's Theorem \ref{TK}, presented, for instance, in \cite{Schneider} and \cite{Klain-Rota}, with a slight 
modification. In fact, the only additional thing we need to show is that for every zonoid $Y$ there exists a decreasing sequence of zonotopes converging to $Y$. Indeed, fix a 
natural number $k\geq 1$ and define
$$
Y_{k}=Y+\frac{1}{2^{k}}B_{1}.
$$ 
$Y_{k}$ is again a zonoid and
$$
h_{Y_{k}}=h_{Y}+\frac{1}{2^{k}}.
$$
As every zonoid is the limit of zonotopes, there exists a zonotope $Z_{k}$ such that 
$$
\delta(Z_{k},Y_{k})\leq \frac{1}{4}\,\frac{1}{2^{k}}.
$$ 
We have
$$
\delta(Y,Z_{k})\rightarrow 0\ \textnormal{as}\ k\rightarrow+\infty,
$$
as a consequence of the triangle inequality and
$$
h_{Y}+\frac{3}{2^{k+2}}=h_{Y_{k}}-\frac{1}{2^{k+2}}\leq h_{Z_{k}}\leq h_{Y_{k}}+\frac{1}{2^{k+2}}.
$$
Then we obtain 
$$
Y+\frac{3}{2^{k+2}} B_{1}\subseteq Z_{k}\subseteq Y+\frac{5}{2^{k+2}}B_{1}.
$$
So, we have the following conclusions:
\begin{itemize}
\item $Z_{k}\supseteq Y$;
\item $Z_{k}\supseteq Z_{k+1}$. 
\end{itemize}

Therefore, for every zonoid $Y$, we have created a decreasing sequence of zonotopes, converging to $Y$.
\end{proof}

\begin{thm}\label{TU}
Every valuation $\mu\in \Val^{+}_{k}(\K^N)$ is uniquely determined by its Klain function.
\end{thm}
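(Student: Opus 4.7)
The plan is to reduce to showing that any $\mu \in \Val_k^+(\K^N)$ with $\KL_\mu \equiv 0$ vanishes identically: uniqueness then follows by applying this to the difference of two valuations with the same Klain function. I proceed by induction on $N$, with the key observation that the Klain function is compatible with restriction to subspaces.

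For the base case $N = k$, a valuation $\mu \in \Val_N^+(\K^N)$ is top-homogeneous, and by Theorem \ref{TH} we have $\mu = c\, V_N$ for a constant $c$. Since $\KL_\mu(\R^N) = c$, the hypothesis forces $c = 0$, hence $\mu = 0$. (The case $k = 0$ is even simpler: such a $\mu$ is constant on $\K^N$ by translation invariance, and the Klain function reads off that constant.)

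For the inductive step, assume the statement holds for all dimensions below $N$, and let $\mu \in \Val_k^+(\K^N)$ with $\KL_\mu \equiv 0$ and $k < N$. Fix any hyperplane $H \subseteq \R^N$ and consider the restriction $\mu|_{\K(H)}$. After identifying $H$ with $\R^{N-1}$, this restriction lies in $\Val_k^+(\K^{N-1})$, and for any $E \in \Gr(N-1,k)$, viewed also as an element of $\Gr(N,k)$, the defining relation $\mu|_E = c_E \Vol_k$ yields $\KL_{\mu|_{\K(H)}}(E) = \KL_\mu(E) = 0$. By the inductive hypothesis in $\R^{N-1}$, $\mu|_{\K(H)} = 0$, so $\mu$ vanishes on every convex body of dimension at most $N-1$. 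Thus $\mu$ is simple, even, continuous and translation invariant, and Theorem \ref{TK} gives $\mu = c\, V_N$; the $k$-homogeneity of $\mu$ with $k < N$ then forces $c = 0$.

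The only subtlety is the naturality of $\KL_\mu$ under restriction, which is immediate from the definition since $c_E$ depends solely on the behaviour of $\mu$ on $\K(E)$ and not on the ambient space. Consequently I do not anticipate a serious obstacle beyond a careful handling of the dimension bookkeeping and of the degenerate cases $k = 0$ and $k = N$.
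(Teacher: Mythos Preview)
Your argument is correct and is precisely the standard induction-on-dimension proof due to Klain: restrict to hyperplanes, use the inductive hypothesis to conclude simplicity, then invoke Theorem~\ref{TK} and compare homogeneity degrees. The paper itself does not supply a proof of Theorem~\ref{TU}; it simply refers the reader to \cite{Schneider}, where the argument recorded is essentially the one you have written. One minor point worth making explicit in a final write-up: when you ``fix any hyperplane $H$'' you should take $H$ through the origin so that $-K \in \K(H)$ whenever $K \in \K(H)$ and the restriction remains even; translation invariance then lets you conclude that $\mu$ vanishes on bodies contained in \emph{affine} hyperplanes as well, which is what simplicity requires.
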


For the proof, see \cite{Schneider}.

\section{Quasi-concave functions}

\begin{defn} A function $f\colon\R^N\rightarrow \mathbb{R}$ is said to be {\em quasi-concave} if
\begin{itemize}
\item $f(x)\geq 0$ for every $x\in \mathbb{R}^{N}$,
\item for every $t>0$, the set
$$
L_{t}(f)=\{x\in\mathbb{R}^{N}:\ f(x)\geq t\}
$$
is either a convex body or is empty.
\end{itemize}
We will denote by ${\cal C}^N$ the set of all quasi-concave functions.
\end{defn}

The proof of the following result can be found in \cite{Colesanti-Lombardi}.

\begin{lemma}
If $f$ belongs to ${\cal C}^N$, then:
\begin{itemize}
\item $f$ admits a maximum in $\R^N$: 
$$
\sup_{x\in\R^N} f(x)=\max_{x\in \R^N}f(x)<+\infty;
$$
\item $\lim_{||x||\rightarrow+\infty}f(x)=0$;
\item $f$ is upper semi-continuous.
\end{itemize}
\end{lemma}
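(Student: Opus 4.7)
The plan is to verify the three bulleted conclusions one at a time; each will follow rather directly from the hypothesis that every super-level set $L_t(f)$ with $t>0$ is either empty or a convex body (hence compact). The essential tool throughout is Cantor's intersection theorem for nested nonempty compact sets.

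For the first bullet, I would split the claim in two. First I show $M:=\sup_{\R^N} f<+\infty$. If instead $M=+\infty$, then $L_n(f)\ne\varnothing$ for every $n\in\N$, and the family $\{L_n(f)\}_{n\in\N}$ is decreasing and consists of nonempty compact sets, so $\bigcap_{n\in\N} L_n(f)\ne\varnothing$. Any point $x$ in the intersection satisfies $f(x)\ge n$ for all $n$, contradicting $f(x)\in[0,+\infty)$. Hence $M<+\infty$. Next I show the supremum is attained: assuming $M>0$ (else trivial), pick $k_0$ so that $M-1/k>0$ for $k\ge k_0$; each $L_{M-1/k}(f)$ is then a nonempty compact set, and the family is decreasing in $k$, so its intersection is nonempty. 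A point $x$ in that intersection satisfies $f(x)\ge M-1/k$ for every $k\ge k_0$, hence $f(x)=M$.

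For the second bullet, for any $\varepsilon>0$ the set $L_\varepsilon(f)$ is compact and therefore bounded, so there exists $R_\varepsilon>0$ with $L_\varepsilon(f)\sub B_{R_\varepsilon}$. Then $\|x\|>R_\varepsilon$ forces $f(x)<\varepsilon$, giving $\limsup_{\|x\|\to\infty} f(x)\le \varepsilon$. Letting $\varepsilon\to 0^+$ and using $f\ge 0$ yields $\lim_{\|x\|\to\infty} f(x)=0$. For the third bullet, I would use the standard characterization that $f$ is upper semi-continuous if and only if $\{x\in\R^N:f(x)\ge \alpha\}$ is closed for every $\alpha\in\R$. For $\alpha>0$ this set is $L_\alpha(f)$, closed because compact (or empty), while for $\alpha\le 0$ it equals $\R^N$ since $f\ge 0$. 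In either case the set is closed, so $f$ is upper semi-continuous.

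There is no serious obstacle in this argument; the only point requiring some care is the appeal to Cantor's theorem in the first bullet, which must be invoked with a strictly decreasing level sequence $t_k\nearrow M$ so that the intersected family is genuinely nonempty and nested.
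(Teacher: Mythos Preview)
Your argument is correct and complete: each of the three bullets follows exactly as you indicate, with Cantor's intersection theorem handling finiteness and attainment of the supremum, boundedness of $L_\varepsilon(f)$ giving the decay at infinity, and closedness of all super-level sets giving upper semi-continuity. The paper itself does not supply a proof of this lemma; it simply refers the reader to \cite{Colesanti-Lombardi}, so there is no in-paper argument to compare against, but your direct verification from the definition of $\C^N$ is the natural one.
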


\begin{example}
If $K\in \K^N$, then we define its characteristic function as
$$
I_{K}:\mathbb{R}^{N}\rightarrow\mathbb{R},\quad 
I_{K}(x)=\begin{cases}
1\; &\mbox{if $x\in K$,}\\
0\; &\mbox{if $x \notin K$.}
\end{cases}
$$
For every $t\ge0$ and $K\in \K^N$, the function $tI_{K}$ belongs to ${\cal C}^N$.
\end{example}

\begin{remark}{\rm
It is easy to show that if $f,g\in {\cal C}^N$, then $f\wedge g \in {\cal C}^N$, while, in general, $f\vee g$ does not belong to $\C^N$.}
\end{remark}

Let $f\in\C^N$; we define the {\em support} of $f$ as the set
$$
cl(\{x\in\R^N\colon f(x)>0\}).
$$
It is easy to verify that the quasi-convexity of $f$ implies that the support of $f$ is convex. In particular, the dimension of the support of $f$ 
is an integer belonging to $\{0,1,\dots,N\}$. The support of $f$ will be denoted by $\supp(f)$. 

\subsection{Simple functions}

Simple functions form a class which will be particularly useful for approximating general quasi-convex functions.

 \begin{defn}
A function $f\colon\R^N\rightarrow\R$ is called simple if it can be written in the form
\begin{equation}\label{simple function}
f=t_1 I_{K_1}\vee...\vee t_m I_{K_m}
\end{equation}
where $0<t_1<\dots<t_m$, and $K_1,\dots,K_m$ are convex bodies such that
$$K_1\supset K_2\supset...\supset K_m.$$
\end{defn}
It is easy to verify that for every $t>0$
\begin{equation}
L_t(f)=\{x\in\R^N\,:\,f(x)\ge t\}=
\left\{
\begin{array}{lllll}
K_i\quad&\mbox{if $t\in(t_{i-1},t_i]$ for some $i=1,\dots m$,}\\
\\
\varnothing\quad&\mbox{if $t>t_m$,}
\end{array}
\right.
\end{equation}
where we have set $t_0=0$. 
Consequently, every simple function belongs to ${\mathcal C}^N$.

\subsection{Valuations on $\C^N$}

\begin{defn}
A functional $\mu\colon{\cal C}^{N}\rightarrow \mathbb{R}$ is said to be a {\em valuation} if
\begin{itemize}
\item $\mu(\underline{0})=0$, where $\underline{0}\in{\cal C}^{N}$ is the function identically equal to zero;
\item for every $f$ and $g\in {\cal C}^{N}$ such that $f\vee g\in{\cal C}^{N}$, we have
$$
\mu(f)+\mu(g)=\mu(f\vee g)+\mu(f\wedge g).
$$
\end{itemize}
\end{defn}

A valuation $\mu$ is said to be rigid motion invariant if for every rigid motion $T\colon\mathbb{R}^{N}\rightarrow \mathbb{R}^{N}$ 
and for every $f\in{\cal C}^{N}$, we have
$$
\mu(f)=\mu(f\circ T)
$$
where $\circ$ denotes the usual composition of functions. We recall that by a rigid motion we mean the composition of a rotation and a 
translation (note the composition between a quasi-concave function and a rigid motion is still a quasi-concave function).

\begin{defn}
A valuation $\mu$ is said to be continuous if for every sequence $\{f_{i}\}_{i\in\mathbb{N}}\subseteq {\cal C}^{N}$ and $f\in{\cal C}^{N}$ such that 
$f_i$ converges point-wise to $f$ in $\R^N$, and $f_i$ is either monotone increasing or decreasing with respect to $i$, pointwise in $\R^N$, we have
$$
\mu(f_{i})\rightarrow\mu(f),\ for\ i\rightarrow +\infty.
$$
\end{defn}

\begin{remark}
\textnormal{If we drop monotonicity in the previous definition, the corresponding notion of continuity, together with translation invariance, becomes too strong. 
Indeed, if $\mu$ is a translation invariant valuation on $\C^N$ such that 
$$
\lim_{i\to\infty}\mu(f_i)=\mu(f)
$$
for every sequence $f_i$, $i\in\N$, in $\C^N$, converging point-wise to $f\in\C^N$,  then $\mu$ is identically zero in $\C^N$. To see this, let $f\in\C^N$ 
have compact support. The sequence $f_i(x)=f(x-i)$, $i\in\N$, converges point-wise to the function identically equal to zero, so that $\mu(f)=0$.
As any function in $\C^N$ is the point-wise limit of functions with compact support, we get $\mu\equiv0$.}
\end{remark}

We can construct some examples of valuations on ${\cal C}^{N}$ using the intrinsic volumes. Let $k\in\{0,\dots,N\}$. For $f\in\C^N$, consider the function
$$
t\,\rightarrow\,u_{k}(t)=V_k(L_t(f))\,\quad
t>0.
$$
This is a decreasing function, which vanishes for $t>M(f)=\max_{\R^n}f$. In particular, $u_{k}$ has bounded variation in
$[\delta,M(f)]$ for every $\delta>0$, hence there exists a Radon measure defined in $(0,\infty)$, which we will denote by 
$S_k(f;\cdot)$, such that 
$$
\mbox{$-S_k(f;\cdot)$ is the distributional derivative of $u_{k}$}.
$$
This means that for every $\varphi\in C_{0}^{\infty}(0,+\infty)$ we have
$$\int_{0}^{+\infty}\varphi'(t)u_{k}(t)dt=\int_{0}^{+\infty}\varphi(t) dS_{k}(f;t).$$

Now, let $\phi$ be a continuous function defined on $[0,\infty)$ such that $\phi(0)=0$. We consider the functional on $\C^N$ defined by
\begin{equation}\label{integral valuation 1}
\mu(f)=\int_{(0,\infty)}\phi(t) dS_k(f;t),\quad f\in\C^N.
\end{equation}
Under suitable assumptions on $\phi$, the functional $\mu$ is a well-defined, continuous and rigid motion invariant valuation. Indeed,
we have the following result (for the proof see \cite{Colesanti-Lombardi}).

\begin{proposition}
Let $\phi\in C([0,\infty))$ and $k\in\{1,\dots,N\}$. Then $\phi$ has finite integral with respect to the measure
$S_k(f;\cdot)$ for every $f\in\C^N$, if and only if $\phi$ satisfies the following condition:
\begin{equation}\label{int cond 2}
\mbox{$\exists\ \delta>0$ s.t. $\phi(t)=0$ for every $t\in[0,\delta]$.}
\end{equation}
Moreover, if $\phi\in C([0,\infty))$ is such that condition \eqref{int cond 2} holds, then the functional \eqref{integral valuation 1} 
defines a rigid motion invariant and continuous valuation on $\C^N$. 
\end{proposition}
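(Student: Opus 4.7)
The plan is to treat the equivalence in its two directions and then verify the valuation, invariance, and continuity properties separately.

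For the sufficiency of (\ref{int cond 2}), I would observe that the total mass of $S_k(f;\cdot)$ on $[\delta,\infty)$ equals $u_k(\delta)=V_k(L_\delta(f))$, which is finite because $L_\delta(f)\in\K^N$; since $\phi$ is continuous and vanishes outside $[\delta,M(f)]$, the integral is bounded by $\|\phi\|_\infty V_k(L_\delta(f))$. For the necessity I would argue by contrapositive: assume $\phi$ does not vanish on any $[0,\delta]$ and exhibit $f\in\C^N$ whose integral diverges. If $\phi(0)\neq 0$, take the radial function $f(x)=\min(1,\|x\|^{-1})$, for which $L_t(f)=B_{1/t}$ and $u_k(t)\sim c\,t^{-k}$ near $0$, so $S_k(f;\cdot)$ has density comparable to $t^{-k-1}$ near $0$ and the lower bound $|\phi|\geq|\phi(0)|/2$ on a right-neighborhood of $0$ forces divergence. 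If instead $\phi(0)=0$ but $\phi$ is nonzero arbitrarily close to $0$, pick $t_n\downarrow 0$ with $\phi(t_n)\neq 0$ and consider the simple function $f=\sup_n t_n I_{B_{r_n}}$ with $r_1<r_2<\cdots$ and $r_n\to\infty$; then $S_k(f;\cdot)$ charges each $\{t_n\}$ with mass proportional to $r_n^k-r_{n-1}^k$, and one may choose $r_n$ so that $|\phi(t_n)|(r_n^k-r_{n-1}^k)\geq 1$ for every $n$.

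For the second part, rigid motion invariance is inherited directly from $V_k$ through $u_k$ and $S_k$. The valuation property reduces to a level-wise identity: when $f\vee g\in\C^N$ the sets $L_t(f\vee g)=L_t(f)\cup L_t(g)$ and $L_t(f\wedge g)=L_t(f)\cap L_t(g)$ all lie in $\K^N$, so the valuation property of $V_k$ on $\K^N$ yields $u_k(f\vee g;t)+u_k(f\wedge g;t)=u_k(f;t)+u_k(g;t)$ for every $t>0$. Differentiating this identity in the sense of distributions gives the measure identity $S_k(f\vee g;\cdot)+S_k(f\wedge g;\cdot)=S_k(f;\cdot)+S_k(g;\cdot)$, and integration against $\phi$ delivers $\mu(f\vee g)+\mu(f\wedge g)=\mu(f)+\mu(g)$.

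The main obstacle is the continuity with respect to monotone pointwise convergence. For $f_i\to f$ monotonically in $\C^N$, the preliminary step is to show Hausdorff convergence $L_t(f_i)\to L_t(f)$ for every $t$ outside the (at most countable) jump set of $u_k(f;\cdot)$, which then gives $u_k(f_i;t)\to u_k(f;t)$ for a.e.\ $t$. For a test function $\phi\in C_0^\infty((0,+\infty))$ one has the identity
\[
\int_0^{+\infty}\phi(t)\,dS_k(f;t)=\int_0^{+\infty}\phi'(t)\,u_k(f;t)\,dt
\]
built into the definition of $S_k$, which reduces the problem to a dominated/monotone convergence argument with envelope $u_k(f_1;\cdot)$ in the decreasing case or $u_k(f;\cdot)$ in the increasing case. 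A general continuous $\phi$ satisfying (\ref{int cond 2}) is then handled by uniform approximation by such smooth test functions with support in $[\delta/2,\infty)$, using the bound from the first paragraph to control the approximation error. The technical heart of the whole argument is the a.e.\ Hausdorff convergence of super-level sets under monotone pointwise convergence of $f_i$, which is where all the real analytic work sits.
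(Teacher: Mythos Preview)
The paper does not prove this proposition; it simply refers the reader to \cite{Colesanti-Lombardi} for the argument. So there is no in-paper proof to compare against, and I can only assess your outline on its own merits.

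Your plan is sound and would go through. A few remarks:

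\emph{Necessity.} Both constructions are correct. For the case $\phi(0)\ne0$, the radial function $f(x)=\min(1,\|x\|^{-1})$ does lie in $\C^N$ (its super-level sets are closed balls), and the divergence is immediate. For the case $\phi(0)=0$, your simple function $f=\sup_n t_n I_{B_{r_n}}$ with $t_n\downarrow0$ and $r_n\uparrow\infty$ is indeed in $\C^N$ (each $L_t(f)$ is a closed ball), and the freedom in choosing $r_n$ forces $\sum_n |\phi(t_n)|(r_n^k-r_{n-1}^k)=\infty$. This is exactly the type of construction alluded to later in the paper when it calls the vanishing of $c$ near $0$ ``the most delicate part'' and again defers to \cite{Colesanti-Lombardi}.

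\emph{Continuity.} The strategy via integration by parts and dominated convergence is the right one, and the uniform envelope you name ($u_k(f_1;\cdot)$ in the decreasing case, $u_k(f;\cdot)$ in the increasing case) is correct on $[\delta,\infty)$. One small imprecision: you assert Hausdorff convergence $L_t(f_i)\to L_t(f)$ outside the jump set of $u_k(f;\cdot)$. In the decreasing case this actually holds for \emph{every} $t$ with $L_t(f)\ne\varnothing$, since $\bigcap_i L_t(f_i)=L_t(f)$. In the increasing case Hausdorff convergence to $L_t(f)$ can genuinely fail, but what one does get is Hausdorff convergence to $\cl(\bigcup_i L_t(f_i))$, which lies between $\cl\{f>t\}$ and $L_t(f)$; hence $\lim_i u_k(f_i;t)\in[u_k(f;t^+),u_k(f;t)]$, and this interval collapses precisely at continuity points of $u_k(f;\cdot)$. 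So the conclusion you need---a.e.\ convergence of $u_k(f_i;\cdot)$ to $u_k(f;\cdot)$---is correct, even if the intermediate Hausdorff statement should be phrased more carefully.

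\emph{Approximation step.} When you pass from $\phi\in C_0^\infty$ to a general continuous $\phi$ vanishing on $[0,\delta]$, note that $\phi$ need not have compact support; but since all measures $S_k(f_i;\cdot)$ and $S_k(f;\cdot)$ are supported in $(0,M]$ with $M=\max(M(f),M(f_1))$, you may truncate and mollify on a fixed compact interval, and the uniform bound $S_k(f_i;[\delta/2,\infty))\le u_k(f_i;\delta/2)\le C$ (with $C$ independent of $i$, by the envelope above) controls the error uniformly in $i$.

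With these clarifications your outline constitutes a complete proof.
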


\begin{remark}\label{zero omogeneo}
\textnormal{For $k=0$, $V_{0}$ is the Euler characteristic and $S_{0}$ is the Dirac point mass measure concentrated at $M(f)=\max_{\mathbb{R}^{N}}f$. 
So the functional becomes 
$$
\mu(f)=\phi(M(f))$$
and the condition on $\phi$ is not necessary.}
\end{remark}

The following characterization of rigid motion invariant and continuous valuations on quasi-concave functions was proved in \cite{Colesanti-Lombardi}.

\begin{thm}\label{characterization} A functional $\mu$ is a rigid motion invariant and continuous valuation on $\C^N$ if and only if there exist $(N+1)$ continuous 
functions $\phi_k$, $k=0,\dots,N$, defined on $[0,\infty)$ with the following properties: there exists $\delta>0$ such that $\phi_k\equiv0$ in $[0,\delta]$ 
for every $k=1,\dots,N$ and
$$
\mu(f)=\sum_{k=0}^N\int_{[0,\infty)}\phi_k(t)dS_k(f;t),\quad\forall\, f\in\C^N.
$$ 
\end{thm}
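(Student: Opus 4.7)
The plan is to separate the two directions and then build up from convex bodies, through simple functions, to general quasi-concave functions.

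The sufficiency is immediate from the Proposition preceding the statement (and Remark \ref{zero omogeneo} for the $k=0$ summand, where no support condition on $\phi_0$ is needed): each summand is a continuous, rigid motion invariant valuation, and the sum inherits these properties.

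For the converse, the central device is the ``level-set trick''. For each $t>0$ and $K\in\K^N$ define
$$
\nu_t(K)=\mu(tI_K).
$$
The identities $tI_{H\cup K}=tI_H\vee tI_K$ (when $H\cup K\in\K^N$) and $tI_{H\cap K}=tI_H\wedge tI_K$ imply that $\nu_t$ is a rigid motion invariant valuation on $\K^N$. For continuity in the Hausdorff sense one uses the sandwich $K\ominus\epsilon B_1 \subset K_j\subset K+\epsilon B_1$ that follows from Hausdorff convergence $K_j\to K$: the sequences $tI_{K+\epsilon_n B_1}\downarrow tI_K$ and $tI_{K\ominus\epsilon_n B_1}\uparrow tI_{\interno(K)}$ are monotone, so the continuity of $\mu$ w.r.t.\ monotone convergence allows one to squeeze $\nu_t(K_j)\to \nu_t(K)$ after a careful application of the valuation property to ``straddle'' the non-monotone sequence $K_j$. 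Applying Hadwiger's Theorem \ref{Hadwiger} to $\nu_t$ produces coefficients $\phi_0(t),\ldots,\phi_N(t)$ such that
$$
\nu_t(K)=\sum_{k=0}^{N}\phi_k(t)V_k(K),\quad\forall\,K\in\K^N.
$$
Testing against finitely many fixed bodies (e.g.\ a tower of balls) and inverting the resulting Vandermonde-type system expresses each $\phi_k(t)$ as a linear combination of quantities $\mu(tI_{B_{r_j}})$; the continuity of $\mu$ along the monotone sequences $tI_{B_r}$ in $t$ and $r$ then yields continuity of $\phi_k$ on $(0,\infty)$, and $\phi_k(0)=0$ follows from $\mu(\underline{0})=0$.

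Next I propagate this identity to simple functions. For $f=t_1 I_{K_1}\vee\cdots\vee t_m I_{K_m}$ with $K_1\supset\cdots\supset K_m$ and $0<t_1<\cdots<t_m$, by induction on $m$ using the valuation identity $\mu(f\vee g)=\mu(f)+\mu(g)-\mu(f\wedge g)$ together with $(t_i I_{K_i})\wedge(t_{i+1}I_{K_{i+1}})=t_i I_{K_{i+1}}$, one obtains
$$
\mu(f)=\sum_{k=0}^{N}\left[\sum_{i=1}^{m-1}(V_k(K_i)-V_k(K_{i+1}))\phi_k(t_i)+V_k(K_m)\phi_k(t_m)\right].
$$
A direct computation of $S_k(f;\cdot)$ for simple $f$ gives the atomic measure $\sum_{i=1}^{m-1}(V_k(K_i)-V_k(K_{i+1}))\delta_{t_i}+V_k(K_m)\delta_{t_m}$, so this is precisely $\sum_{k=0}^{N}\int\phi_k(t)\,dS_k(f;t)$.

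To finish, approximate an arbitrary $f\in\C^N$ by an increasing sequence of simple functions $f_n$ obtained by dyadic discretization of its super-level sets; by the continuity of $\mu$, $\mu(f_n)\to\mu(f)$, while $S_k(f_n;\cdot)$ converges weakly to $S_k(f;\cdot)$, yielding the claimed representation on all of $\C^N$. The integrability condition for $k\ge1$ (existence of $\delta>0$ with $\phi_k\equiv0$ on $[0,\delta]$) is forced by the necessary integrability statement in the Proposition preceding the theorem: if no such $\delta$ existed, one could construct $f\in\C^N$ for which the integral diverges, contradicting finiteness of $\mu(f)$; equivalently, apply $\mu$ to $tI_{B_R}$ and let $R\to\infty$ for small $t$.

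The main obstacle is the continuity of $\nu_t$ in the Hausdorff sense, since $\mu$ is only assumed continuous along monotone sequences and Hausdorff convergence of bodies does not translate into monotone convergence of the corresponding characteristic functions. Circumventing this requires combining the valuation identity with the outer and inner parallel body approximations to express $\nu_t(K_j)-\nu_t(K)$ as a telescoping combination of quantities that can be made small through the monotone continuity of $\mu$; everything afterwards is an organized bookkeeping exercise in the valuation property and an approximation argument.
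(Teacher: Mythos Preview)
The paper does not prove this theorem; it simply quotes it from \cite{Colesanti-Lombardi}. Your outline is in fact the strategy used there, and it matches the pattern visible in the paper's own proofs of Theorems~\ref{aggiunta 2} and~\ref{TH2}: define $\nu_t(K)=\mu(tI_K)$, invoke a characterization theorem for valuations on $\K^N$ to produce the coefficients $\phi_k(t)$, extend to simple functions by induction on the number of levels, and pass to general $f$ by monotone approximation. The computation on simple functions and the identification of $S_k(f;\cdot)$ as an atomic measure are exactly right.

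The real gap is the one you yourself flag: Hausdorff continuity of $\nu_t$. Your proposed fix does not work as written. The inner approximation $tI_{K\ominus\epsilon B_1}\uparrow tI_{\interno(K)}$ has a limit that is \emph{not} in $\C^N$ (the superlevel set $\interno(K)$ is open, not a convex body), so $\mu$ cannot be applied to it; and when $\dim K<N$ the inner parallel body is empty for every $\epsilon>0$, so there is no inner sandwich at all. Moreover $\mu$ is not assumed monotone, so the inclusion $K\ominus\epsilon B_1\subset K_j\subset K+\epsilon B_1$ gives no inequality between the values $\nu_t(K\ominus\epsilon B_1),\nu_t(K_j),\nu_t(K+\epsilon B_1)$, and there is no obvious ``telescoping'' valuation identity that bridges a non-monotone Hausdorff-convergent sequence.

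The way this is actually handled (and the reason the present paper isolates Theorem~\ref{TK.2}) is to \emph{abandon} full Hausdorff continuity. One only has, immediately, that $\nu_t$ is continuous along \emph{decreasing} sequences of convex bodies: if $K_j\supset K_{j+1}$ and $\bigcap_j K_j=K$, then $tI_{K_j}\searrow tI_K$ pointwise and monotone continuity of $\mu$ applies. One then needs Hadwiger's theorem under this weaker hypothesis. Following Klain's inductive proof, the only place continuity enters is in passing from zonotopes to zonoids and from generalized zonoids to arbitrary centrally symmetric bodies; the paper's Theorem~\ref{TK.2} shows precisely that decreasing-sequence continuity suffices for these steps (every zonoid, and more generally every convex body, is the decreasing limit of a sequence obtained by adding shrinking balls and approximating). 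With that variant of Hadwiger in hand, the rest of your argument goes through unchanged.
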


\begin{remark}\label{r1}
\textnormal{
Let $\mu$ be a continuous and rigid motion invariant valuation on $\K^N$.
We may apply the previous result to a function of the form $f=sI_{K}$ with $s>0$ and $K\in \K^N$. 
Note that, for $k=0,\dots,N$, it is easy to see that $S_k(f;\cdot)=V_{k}(K)\delta_{s}(\cdot)$, where
$\delta_s$ is the Dirac point-mass measure concentrated at $s$. Therefore
$$
\mu(s I_K)=\sum_{k=0}^N\phi_k(s)V_k(K).
$$}
\end{remark}

\begin{remark}\label{monotone case} 
{\em As proved in \cite[Theorem 1.3]{Colesanti-Lombardi}, if we assume additionally that $\mu$ is monotone (e.g.\ increasing)
in Theorem \ref{characterization}, we obtain also a different representation formula. Namely, there exist $\nu_k$, $k=0,\dots,N$,
non-atomic Radon measures on $[0,\infty)$ such that
$$
\mu(f)=\sum_{k=0}^N\int_{[0,\infty)} u_k(t)\,d\nu_k(t)\quad\forall\, f\in\C^N.
$$
Moreover, there exists a $\delta>0$ such that the support of $\nu_k$ is contained in $[\delta,\infty)$ for every $k\ge1$.} 

\end{remark}

\section{Characterization of simple and $N$-homogeneous valuations}

From now on we consider valuations on $\C^N$ which are continuous and {\em translation invariant} (instead of rigid motion invariant).
We will denote by $\Val({\cal C}^N)$ the space of such valuations.

In the decomposition result that we are going to prove for elements of $\Val(\C^N)$, {\em homogeneous valuations}
will have a central role.  Let $f\in {\cal C}^N$, we fix $\lambda>0$ and define $g_{\lambda}\colon\mathbb{R}^{N}\rightarrow \mathbb{R}^{N}$, as 
$g_{\lambda}(x)=\frac{x}{\lambda}$. It is easy to check that $f\circ g_\lambda$ is still an element of $\C^N$. Moreover, we set
$h\colon\R^N\to\R^N$, $h(x)=-x$. Similarly, $f\circ h\in\C^N$ for every $f\in\C^N$.

\begin{defn}
A valuation $\mu\in \Val({\cal C}^N)$ is said to be $k$-homogeneous, with $k\in \R$, if 
$$
\mu(f\circ g_{\lambda})=\lambda^{k}\mu(f)
$$
for all $f\in {\cal C}^N$ and $\lambda>0$. The class of $k$-homogeneous valuations in $\Val({\cal C}^N)$ will be denoted by $\Val_{k}({\cal C}^N)$.

A valuation $\mu\in \Val({\cal C}^N)$ is said to be even if 
$$
\mu(f\circ h)=\mu(f),
$$ 
for all $f\in {\cal C}^N$. The class of even (resp. $k$-homogenous and even) valuations in $\Val(\C^N)$ will be denoted by
$\Val^{+}({\cal C}^N)$ (resp. $\Val^{+}_k({\cal C}^N)$).
\end{defn}

\begin{remark}\label{l2}
\textnormal{Let $\lambda>0$ and $\mu\in \Val({\cal C}^N)$. The functional $\overline{\mu}:{\cal C}^N\rightarrow\mathbb{R}$ defined by 
$$
\overline{\mu}(f)=\mu(f\circ g_{\lambda}),\quad\forall\, f\in\C^N,
$$ 
belongs to $\Val({\cal C}^N)$.}
\end{remark}

The next proposition gives us a tool that will be useful in the rest of the paper.

\begin{proposition}\label{p1}
Let $\mu\colon{\cal C}^N\rightarrow \mathbb{R}$ be a continuous valuation. If
$$ \mu(tI_{P})=0,\ \forall t>0,\ \forall P\in \Part^N,$$
then $\mu\equiv 0$.
\end{proposition}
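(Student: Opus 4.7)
The plan is a three-step bootstrap: upgrade the vanishing hypothesis from $tI_P$ (polytopes) to $tI_K$ (general convex bodies), then to simple functions, and finally to arbitrary elements of $\C^N$.

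Step 1. I would fix $K\in\K^N$ and $t>0$ and produce a decreasing sequence of polytopes $P_j\supseteq K$ with $\delta(P_j,K)\to 0$; such a sequence exists because outer polytopal approximations are dense in $\K^N$ and one can enforce nestedness by successive intersection. A direct pointwise check (using that $K$ is closed) shows that $tI_{P_j}\downarrow tI_K$ on $\R^N$. Since $\{tI_{P_j}\}$ is a monotone sequence in $\C^N$ with limit $tI_K\in\C^N$, continuity of $\mu$ gives
$$
\mu(tI_K)=\lim_{j\to\infty}\mu(tI_{P_j})=0.
$$

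Step 2. Next I would prove, by induction on the number of levels $m$, that $\mu$ annihilates every simple function $f=t_1I_{K_1}\vee\cdots\vee t_mI_{K_m}$. The case $m=1$ is Step 1. For the inductive step, set $f_1=t_1I_{K_1}$ and $f_2=t_2I_{K_2}\vee\cdots\vee t_mI_{K_m}$. Using the strict ordering $t_1<\cdots<t_m$ and the nesting $K_1\supset\cdots\supset K_m$, a pointwise check gives $f_1\vee f_2=f\in\C^N$ and $f_1\wedge f_2=t_1I_{K_2}$. The valuation identity then yields
$$
\mu(f)=\mu(f_1)+\mu(f_2)-\mu(t_1I_{K_2}),
$$
which vanishes by Step 1 applied to $f_1$ and $t_1I_{K_2}$ and by the inductive hypothesis applied to $f_2$.

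Step 3. For an arbitrary $f\in\C^N$, let $M=\max_{\R^N} f$ and define the dyadic approximations
$$
f_k=\bigvee_{i=1}^{2^k}\frac{iM}{2^k}\,I_{L_{iM/2^k}(f)},\qquad k\in\N.
$$
Each $f_k$ is a simple function (its levels $L_{iM/2^k}(f)$ are convex bodies nested in $i$); $f_k\le f_{k+1}$ because the dyadic partition refines; and $0\le f-f_k\le M/2^k$ gives $f_k\uparrow f$ pointwise. Continuity of $\mu$ and Step 2 then force $\mu(f)=\lim_k\mu(f_k)=0$.

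The main obstacle will be Step 2: one has to verify that the minimum $f_1\wedge f_2$ is again a function of the form $t_1I_K$ that is already controlled. This is precisely where the strict level ordering combined with the nestedness of the super-level sets of a simple function is used, and it is what makes the inductive identity collapse. The outer polytopal approximation in Step 1 and the monotone dyadic approximation in Step 3 are comparatively routine.
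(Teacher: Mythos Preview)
Your proof is correct and follows essentially the same route as the paper: polytopal approximation, induction on the number of levels of a simple function, and monotone dyadic approximation. The paper inserts an extra fourth step (first treating compactly supported $f$, then passing to general $f$ via $f_i=f\,I_{B_i}$), but your observation that the dyadic approximants in Step~3 already work for arbitrary $f\in\C^N$ makes that additional reduction unnecessary; your splitting in Step~2 (peeling off the bottom level rather than the top) is likewise a cosmetic variant of the paper's induction.
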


The proof of this result follows closely the lines of that of Theorem 1.2 in \cite{Colesanti-Lombardi}.

\begin{proof}

\textbf{First step.} We know, by \cite{Schneider}, that for every $K\in \K^N$ there exists a decreasing sequence of polytopes $P_{i}$, $i\in\N$, 
such that $P_{i}$ converges to $K$. Since the sequence is decreasing, we have
$$
tI_{P_{i}}\searrow tI_{K},\quad \textnormal{as}\quad i\rightarrow +\infty.
$$
By continuity of the valuation, we have $\mu(tI_{P_{i}})\rightarrow\mu(tI_{K})$. Since $\mu(tI_{P_{i}})=0$ for all $i\in \N$, we obtain 
$\mu(tI_{K})=0$ for all $t>0$ and $K\in \K^N$.

\textbf{Second step.} We prove that $\mu$ vanishes on simple functions $g$ of the form 
$$
g=\bigvee_{i=1}^{n}t_{i}I_{K_{i}},
$$
where $n\in\N$ is fixed, $0<t_1<\dots<t_n$, and $K_1,\dots,K_n$ are convex bodies such that
$K_1\supset K_2\supset\dots\supset K_n$. We proceed by induction on $n$.

If $n=1$, the assertion follows from the previous step. Assume, now, that the claim has been proved up to some $n\geq 1$. Let 
$$
g=\bigvee_{i=1}^{n+1}t_{i}I_{K_{i}}
$$ 
be a simple function. Using the valuation property of $\mu$, we can write
\begin{eqnarray*}
\mu(g)&=&\mu\left(\bigvee_{i=1}^{n}t_{i}I_{K_{i}}\right)+\mu(t_{n+1}I_{K_{n+1}})-\mu\left(\bigvee_{i=1}^{n}t_{i}I_{K_{i}}\wedge t_{n+1}I_{K_{n+1}}\right)\\
&=&-\mu\left(\bigvee_{i=1}^{n}t_{i}I_{K_{i}}\wedge t_{n+1}I_{K_{n+1}}\right).
\end{eqnarray*}
The last equality holds as $\mu(t_{n+1}I_{K_{n+1}})=\mu(\bigvee_{i=1}^{n}t_{i}I_{K_{i}})=0$ by the induction assumption. Since 
$$
\bigvee_{i=1}^{n}t_{i}I_{K_{i}}\wedge t_{n+1}I_{K_{n+1}}=t_{1}I_{K_{n+1}},
$$ 
we obtain $\mu(g)=0$ again by the induction assumption.

\textbf{Third step.} We consider now a quasi-concave function $f$ that has bounded support. That means there exists a convex body $K$ such that 
$f(x)=0$ for every $x\notin K$ or, equivalently, $L_{t}(f)\subset K$ for all $t>0$. Let $M(f)=\max_{\mathbb{R}^{N}}f$ and define, for $i\in \mathbb{N}$, 
$$
P_{i}=\left\{t_{j}=j\frac{M(f)}{2^{i}}:j=1,...,2^{i}\right\},$$
and 
$$
f_{i}=\bigvee_{j=1}^{2^{i}}t_{j}I_{L_{t_{j}}(f)}.
$$
The sequence $f_{i}$, $i\in\N$, is an increasing sequence of simple functions that converges point-wise to $f$ in ${\cal C}^N$, as $i\rightarrow+\infty$. 
So, by the continuity of $\mu$, we have $\mu(f_{i})\rightarrow\mu(f)$, as $i\rightarrow+\infty$. Since every $f_{i}$ is simple, the previous step implies 
$\mu(f_{i})=0$. Hence $\mu(f)=0$.

\textbf{Fourth step.} Finally, we consider an arbitrary $f\in{\cal C}^N$ and, for $i\in \mathbb{N}$, we define $B_{i}=\{x\in\mathbb{R}^{N}: ||x||\leq i \}$ and $f_{i}=fI_{B_{i}}$.
Then we have an increasing sequence of quasi-concave functions with compact support, that converges point-wise to $f$, so that $\mu(f_{i})\rightarrow\mu(f)$. 
We get $\mu(f)=0$, since $\mu(f_{i})=0$ for every $i$ by the previous part of the proof.
\end{proof}

\begin{remark}\label{connection}
\textnormal{The following construction connects valuations on $\C^N$ and valuations on convex bodies.  
Let $\mu\in \Val({\cal C}^N)$, and fix $t>0$; we define
$$
\widetilde{\mu}_{t}\colon\K^N\rightarrow \R\quad \textnormal{by}\quad \widetilde{\mu}_{t}(K)=\mu(tI_{K}),\quad\forall\, K\in\K^N.
$$
From the valuation property and the translation invariance of $\mu$, it follows that this is a translation invariant valuation on $\K^N$.
Moreover, by the continuity of $\mu$, it is continuous with respect to decreasing sequences.} 
\end{remark}

\subsection{Proof of Theorems \ref{teorema 1} and \ref{teorema 2}}

The following is an analogue of McMullen's theorem in ${\cal C}^N$.

\begin{thm}\label{t1}
Let $\mu\colon{\cal C}^N\rightarrow \mathbb{R}$ be a continuous and translation invariant valuation.
There exist a unique $\mu_{k}\in \Val_{k}({\cal C}^N)$, $k=0,\dots,N$, such that
$$
\mu=\sum_{k=0}^{N}\mu_{k}.
$$
\end{thm}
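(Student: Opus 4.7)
The plan is to mimic McMullen's polynomial-expansion proof. For every $f\in\C^N$ I introduce the dilation function
$$
\phi_f(\lambda):=\mu(f\circ g_\lambda),\quad\lambda>0,
$$
and aim to show that $\phi_f$ is a polynomial in $\lambda$ of degree at most $N$. Once this is known, writing $\phi_f(\lambda)=\sum_{k=0}^N a_k(f)\lambda^k$ and inverting the Vandermonde system at $N+1$ distinct scales expresses $a_k(f)$ as a fixed linear combination of the values $\phi_f(\lambda_0),\dots,\phi_f(\lambda_N)$. Setting $\mu_k(f):=a_k(f)$, the valuation identity, translation invariance and continuity on monotone sequences transfer directly from $\mu$, since $f\mapsto f\circ g_\lambda$ commutes with $\vee$, $\wedge$, translations and pointwise monotone limits. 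The $k$-homogeneity of $\mu_k$ follows from $\phi_{f\circ g_s}(\lambda)=\phi_f(s\lambda)$, and uniqueness follows because an identity $\sum_k\lambda^k\nu_k(f)\equiv0$ in $\lambda>0$ with each $\nu_k$ being $k$-homogeneous forces $\nu_k\equiv0$ for all $k$.

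To establish the polynomial structure I would proceed in three steps. First, for $f=tI_P$ with $P$ a polytope, translation invariance of $\mu$ lets me assume $0\in P$, so the dilates $\lambda P$ are monotone in $\lambda$: for $\lambda_n\downarrow\lambda$ one has $\lambda_n P\supseteq\lambda P$ and $\bigcap_n\lambda_n P=\lambda P$, hence $tI_{\lambda_n P}\downarrow tI_{\lambda P}$ pointwise, and continuity of $\mu$ on decreasing sequences makes $\lambda\mapsto\widetilde{\mu}_t(\lambda P)=\mu(tI_{\lambda P})$ right-continuous. By Remark \ref{connection}, $\widetilde{\mu}_t$ restricted to $\Part^N$ is a translation invariant valuation on polytopes, so Theorem \ref{MP} yields an expansion $\widetilde{\mu}_t(\lambda P)=\sum_{k=0}^N\lambda^k\widetilde{\mu}_{t,k}(P)$ for rational $\lambda>0$; right-continuity promotes this to all $\lambda>0$. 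Second, for a simple function $g=\bigvee_{i=1}^m t_i I_{P_i}$ with nested polytope level sets, the valuation identity applied inductively on $m$ --- using the computation $\bigvee_{i=1}^n t_i I_{P_i}\wedge t_{n+1}I_{P_{n+1}}=t_n I_{P_{n+1}}$ already employed in the proof of Proposition \ref{p1} --- reduces $\phi_g(\lambda)$ to a finite alternating sum of polynomials of the previous type. Third, for arbitrary $f\in\C^N$, the truncation and range-discretisation from the proof of Proposition \ref{p1} provide a pointwise monotone sequence $g_n\to f$ of such simple functions; for each fixed $\lambda$, $g_n\circ g_\lambda\to f\circ g_\lambda$ monotonically and hence $\phi_{g_n}(\lambda)\to\phi_f(\lambda)$. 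Since the polynomials of degree at most $N$ form a finite-dimensional vector space, pointwise convergence on $(0,\infty)$ forces coefficient-wise convergence, and $\phi_f$ is itself a polynomial of degree at most $N$.

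The principal obstacle is the first step: Theorem \ref{MP} only delivers rational homogeneity on polytopes, and the whole argument depends on upgrading this to genuine polynomial behaviour in the real parameter $\lambda$. This upgrade is possible only because $\mu$ enjoys one-sided continuity along monotone families, which is also why the origin has to be brought inside $P$ first. A minor technicality is to maintain polytope level sets through the approximation in the third step while preserving the nesting required by the definition of a simple function; this can be handled either by approximating level sets one at a time, or, more economically, by first extending the first step to convex-body arguments $K$ via a further decreasing-polytope approximation (cf.\ the opening step of the proof of Proposition \ref{p1}), after which the second step applies directly to the canonical simple-function approximation $\bigvee_j t_j I_{L_{t_j}(f)}$ with convex-body levels.
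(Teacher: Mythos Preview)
Your proposal is correct and follows essentially the same approach as the paper: both arguments rest on applying McMullen's Theorem~\ref{MP} to $\widetilde\mu_t$ on polytopes, upgrading rational to real homogeneity via the dilation continuity $\lambda\mapsto\mu(tI_{\lambda P})$, and inverting a Vandermonde system at integer scales to extract the homogeneous components. The only difference is organizational: the paper defines $\mu_k(f)=\sum_j c_{k,j}\mu(f\circ g_j)$ directly for all $f\in\C^N$ and then invokes Proposition~\ref{p1} as a black box to verify both $k$-homogeneity and the decomposition $\mu=\sum_k\mu_k$, whereas you first establish that $\phi_f$ is a polynomial for every $f$ by reproducing the approximation scheme of Proposition~\ref{p1} inline (your Steps~2--3) and only then read off the coefficients. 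The paper's route is slightly more economical, but the content is the same.
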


\begin{proof}
If we fix $t>0$, then we can define $\widetilde{\mu}_{t}\colon\Part^N\rightarrow\mathbb{R}$ as 
$$
\widetilde{\mu}_{t}(P)=\mu(tI_{P})\quad\forall\, P\in{\mathcal P}^N,
$$
which is a translation invariant valuation on $\Part^N$. By Theorem \ref{MP}, there exist $\widetilde{\mu}_{t,k}$, $k=0,...,N$, translation invariant and rational $k$-homogeneous 
valuations on $\Part^N$, such that
\begin{equation}\label{aggiunta 1}
\widetilde{\mu}_{t}(\lambda P)=\sum_{k=0}^{N}\lambda^{k}\widetilde{\mu}_{t,k}(P)
\end{equation}
for every $P\in \Part^N$ and for all rational $\lambda>0$.

If we write \eqref{aggiunta 1} for $\lambda=1$, we have 
$$
\mu(tI_{P})=\widetilde{\mu}_{t,0}(P)+...+\widetilde{\mu}_{t,N}(P).
$$
Similarly, for $\lambda=2$, we get
$$
\mu(tI_{2P})=\widetilde{\mu}_{t,0}(P)+...+2^{N}\widetilde{\mu}_{t,N}(P).
$$
We can repeat this argument for $\lambda=3,\dots,N+1$ to obtain a system of $N+1$ linear equations and a matrix of Vandermonde type associated to it:
\begin{equation}
M:=
\begin{pmatrix}
1 & 1 & \dots & 1\\
1 & 2 & \dots & 2^{N}\\
\vdots & \vdots & \ddots & \vdots\\
1 & N+1 & .... & (N+1)^{N}
\end{pmatrix}
\end{equation}
Since $M$ is invertible, we have
\begin{equation}\label{aggiunta1}
\widetilde{\mu}_{t,k}(P)=\sum_{j=1}^{N+1}c_{k,j}\mu(tI_{jP}),
\end{equation}
where $c_{k,j}$ are the coefficients of the inverse matrix of $M$, which are independent of $t$ and of $P$. Moreover, we observe that the continuity of $\mu$ implies that $\widetilde{\mu}_{t}$ is dilation continuous, i.e.\ $\lambda \mapsto \widetilde{\mu}_{t}(\lambda P)$ is continuous for every $P$.

\noindent
Then equation $(\ref{aggiunta1})$ implies that all $\widetilde{\mu}_{t,k}$ are dilation continuous, for all $t>0$. Since $\widetilde{\mu}_{t,k}$ are also rational $k$-homogeneous, we can conclude that they are $k$-homogeneous for all real positive number $\lambda$ and for all $t>0$.

Now, we want to determine a set of valuations $\mu_{k}$ on ${\cal C}^N$, $k=0,\dots,N$, such that $\mu_{k}(tI_{P})=\widetilde{\mu}_{t,k}(P)$ for every polytope $P$ and 
every $t>0$. So we define $\mu_{k}\colon{\cal C}^N\rightarrow \mathbb{R}$ by
\begin{equation}\label{aggiunta2}
\mu_{k}(f)=\sum_{j=1}^{N+1}c_{k,j}\mu(f\circ g_{j}),
\end{equation}
where we recall that for any $s>0$, $g_s\colon\R^N\to\R^N$ is defined by
$$
g_s(x)=\frac xs.
$$
For every $k$, $\mu_{k}$ is a continuous translation invariant valuation on ${\cal C}^N$.

As next step, for $\lambda>0$ and $k=0,\dots,N$, we define $\overline{\mu}_{k}\colon{\cal C}^N\rightarrow \mathbb{R}$ by
$$
\overline{\mu}_{k}(f)=\mu_{k}(f\circ g_{\lambda})-\lambda^{k}\mu_{k}(f),
$$
which turns out to be a continuous valuation. Furthermore, for $P\in \Part^N$, 
$$
\overline{\mu}_{k}(tI_{P})=\mu_{k}(tI_{\lambda P})-\lambda^{k}\mu_{k}(tI_{P})=\widetilde{\mu}_{t,k}(\lambda P)-\lambda^{k}\widetilde{\mu}_{t,k}(P).
$$
Since $\widetilde{\mu}_{t,k}$ is a homogeneous valuation of degree $k$ on $\Part^N$, by McMullen's Theorem \ref{MP}, we have $\overline{\mu}_{k}(tI_{P})=0$.
By Lemma $\ref{p1}$, we obtain $\overline{\mu}_{k}(f)=0$ for all $f\in {\cal C}^N$ and this means that $\mu_{k}$ is homogeneous of degree $k$.

Finally, we define $\widetilde{\mu}\colon{\cal C}^N\rightarrow \mathbb{R}$, as
$$
\widetilde{\mu}(f)=\mu(f)-\sum_{k=0}^{N}\mu_{k}(f),
$$
which results a translation invariant, continuous valuation such that, for every $t>0$ and $P\in \Part^N$, $\widetilde{\mu}(tI_{P})=0$.
Then $\widetilde{\mu}(f)=0$ for every $f\in {\cal C}^N$ by Lemma $\ref{p1}$ and we have the conclusion
$$
\mu(f)=\sum_{k=0}^{N}\mu_{k}(f)
$$
for every $f\in {\cal C}^N$.

\noindent
Finally, we conclude the proof with a remark about uniqueness. If we have 
$$\mu=\sum_{k=0}^{N}\mu_{k}=\sum_{k=0}^{N}\sigma_{k},$$ then we are able to write
$$0=\sum_{k=0}^{N}\mu_{k}-\sigma_{k}.$$
By the homogeneity of $\mu_{k}$ and $\sigma_{k}$, we obtain the uniqueness.

\end{proof}

We conclude this section with two results that are analogous to Theorem $\ref{TK.2}$ and $\ref{TH}$, in the case of valuations on $\C^N$.
We say that a valuation $\mu$ on $\C^N$ is {\em simple}, if $\mu(f)=0$ for all $f\in {\cal C}^N$ such that $\dim(\supp(f))<N$, where we recall that
$\supp(f)$ is the support of $f$, i.e. the set of points where $f$ is strictly positive.

\begin{thm}\label{aggiunta 2}
Let $\mu\in \Val^{+}({\cal C}^N)$; $\mu$ is simple if and only if there exists a unique function $c$ from $[0,+\infty)$ to $\R$ and $\delta>0$, 
with the following properties:
\begin{itemize}
\item $c$ is continuous in $[0,\infty)$,
\item $c(t)=0$ for all $t\in [0,\delta]$,
\end{itemize}
such that
$$
\mu(f)=\int_{(0,+\infty)}c(t)dS_{N}(f;t)
$$
for every $f\in {\cal C}^N$.
\end{thm}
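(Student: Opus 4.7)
The reverse implication is immediate: when $\mu$ has the stated integral form and $\dim\supp f < N$, then $V_N(L_t(f))=0$ for every $t>0$, so $S_N(f;\cdot)\equiv 0$ and $\mu(f)=0$. I focus on the forward direction.

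I would first construct $c$ via a Klain-type argument. For each $t>0$, Remark \ref{connection} shows that $\widetilde\mu_t(K):=\mu(tI_K)$ is a translation invariant valuation on $\K^N$, continuous with respect to decreasing sequences; it also inherits evenness and simplicity from $\mu$ (since $tI_K$ has support $K$ and $(tI_K)\circ h=tI_{-K}$). Theorem \ref{TK.2} then yields a scalar $c(t)$ with $\mu(tI_K)=c(t)V_N(K)$ for all $K\in\K^N$. Setting $c(0)=0$, continuity of $c$ on $[0,\infty)$ follows by fixing $K$ with $V_N(K)>0$ and applying the continuity of $\mu$ to the monotone sequences $t_n I_K\to t_0 I_K$ (continuity along monotone sequences suffices since any real sequence has monotone subsequences). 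Next I prove the representation for simple $f=\bigvee_{i=1}^m t_i I_{K_i}$ by induction on $m$, following the second step of Proposition \ref{p1}: the valuation identity reduces $\mu(f)$ to $\sum_{i=1}^m c(t_i)(V_N(K_i)-V_N(K_{i+1}))$ with $K_{m+1}=\varnothing$, which equals $\int c\,dS_N(f;\cdot)$ since $S_N(f;\cdot)=\sum_i(V_N(K_i)-V_N(K_{i+1}))\delta_{t_i}$.

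The main obstacle is establishing the vanishing of $c$ near $0$. If $c$ does not vanish on any $[0,\delta]$, I extract $t_n\downarrow 0$ with $c(t_n)$ of constant sign, say positive, by passing to a subsequence. I then choose nested balls $K_1\subset K_2\subset\cdots$ centered at the origin so that $V_N(K_n)-V_N(K_{n-1})=1/c(t_n)$ (with $V_N(K_0)=0$), and set $f_N=\bigvee_{n=1}^N t_n I_{K_n}$. This is a simple function, and the formula above collapses to $\mu(f_N)=\sum_{n=1}^N 1=N$. On the other hand, $f_N$ is pointwise increasing and converges to an $f\in\C^N$ whose super-level set at any level $t\in(t_{n+1},t_n]$ is the convex body $K_n$. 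Continuity of $\mu$ then forces $\mu(f_N)\to\mu(f)\in\R$, contradicting $\mu(f_N)=N\to\infty$.

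With $c\equiv 0$ on some $[0,\delta]$ in hand, I extend the representation to every $f\in\C^N$ by approximation, as in the third step of Proposition \ref{p1}: the dyadic simple functions $f_i$ increase to $f$, so $\mu(f_i)\to\mu(f)$ by continuity. Because $c$ is supported in $[\delta,\infty)$ and the measures $S_N(f_i;\cdot)|_{[\delta,\infty)}$ have total mass bounded by $V_N(L_\delta(f))<\infty$ and converge weakly to $S_N(f;\cdot)|_{[\delta,\infty)}$ (from the pointwise convergence of the decreasing functions $u_N(f_i;\cdot)\to u_N(f;\cdot)$), one passes to the limit on the right-hand side and obtains the claimed identity. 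Uniqueness of $c$ is immediate from $c(t)=\mu(tI_K)/V_N(K)$.
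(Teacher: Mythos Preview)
Your proof is correct and follows essentially the same route as the paper: obtain $c(t)$ from Theorem~\ref{TK.2} applied to $\widetilde\mu_t$, extend to simple functions via the valuation identity, establish that $c$ vanishes near $0$, and pass to general $f$ by monotone approximation. The one substantive difference is that the paper defers the ``most delicate part'' (the vanishing of $c$ on some $[0,\delta]$) to Lemma~6.5 of \cite{Colesanti-Lombardi}, whereas you supply a clean self-contained contradiction argument by building nested balls with $V_N(K_n)-V_N(K_{n-1})=1/c(t_n)$ so that $\mu(f_N)=N$ diverges along an increasing sequence in $\C^N$; this is a nice explicit version of the same idea.
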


\begin{proof}
For a fixed $t>0$, we consider the map $\widetilde{\mu}_{t}\colon\K^N\rightarrow \R$ defined by 
$$
\widetilde{\mu}_t(K)=\mu(t\,I_K),\quad\forall\,K\in\K^N.
$$
This is a translation invariant, even, valuation, which is additionally simple (as $\mu$ is simple) and continuous with respect to decreasing sequences. Therefore, 
by Theorem $\ref{TK.2}$, there exists a real constant, depending on $t$, $c(t)$, such that 
\begin{equation}\label{aggiunta3}
\widetilde{\mu}_{t}(K)=c(t)\V_N(K),\quad\forall\, K\in\K^N.
\end{equation}
The function $c$ is continuous by the continuity of $\mu$ and it is univocally determined by 
$(\ref{aggiunta3})$. The integral form of $\mu$ and the additional condition on $c$ can be obtained using the same argument of  the proof of 
Lemma 6.5 and Theorem 1.2 of \cite{Colesanti-Lombardi}, that we briefly outline.
We first consider a general simple function $f$ of the form
$$
f=t_1 I_{K_1}\vee...\vee t_m I_{K_m},
$$ 
with $0<t_1<\dots<t_m$, $K_1,\dots,K_m$ convex bodies such that
$$
K_1\supset K_2\supset...\supset K_m.
$$
The extension of \eqref{aggiunta3} to $f$ is
$$
\mu(f)=\sum_{i=1}^{m-1}c(t_{i})(\V_{N}(K_{i})- \V_{N}(K_{i+1}))+c(t_{m})\V_{N}(K_{m}).
$$
This follows easily the valuation property of $\mu$ and  \eqref{aggiunta3}. Then, with a similar argument of Step $3$ in the proof of Proposition \ref{p1}, 
we prove the validity of the integral form of $\mu$ for all $f\in \C^N$ with bounded support.

Using the previous steps we prove that $c$ must vanish in a right neighborhood of the origin. This is the most delicate part of the proof, for 
which we refer the reader to the proof of Lemma 6.5 in \cite{Colesanti-Lombardi}.

Finally, we obtain the validity of the integral representation formula in the general case, by an approximation argument.
\end{proof}

\begin{thm}\label{TH2}
Let $\mu\in \Val_{N}({\cal C}^N)$. There exists a unique function $c\colon[0,+\infty)\rightarrow \R$ and a number $\delta>0$, with the following properties:
\begin{itemize}
\item $c$ is continuous,
\item $c(t)=0$, for all $t\in [0,\delta]$,
\end{itemize}
such that 
$$
\mu(f)=\int_{(0,+\infty)}c(t)dS_{N}(f;t)
$$
for all $f\in {\cal C}^N$.
\end{thm}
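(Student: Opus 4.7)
The plan is to reduce Theorem \ref{TH2} to the already-proved Theorem \ref{aggiunta 2}, by showing that any $\mu\in\Val_N(\C^N)$ is automatically \emph{simple} and \emph{even}; once this is done, Theorem \ref{aggiunta 2} yields the integral representation together with the asserted continuity and vanishing near the origin of $c$.

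The main tool is the construction of Remark \ref{connection}. For $t>0$ fixed, put $\widetilde{\mu}_t(K)=\mu(tI_K)$ for $K\in\K^N$; this is a translation invariant valuation on $\K^N$, continuous with respect to decreasing Hausdorff sequences. Crucially, the identity $(tI_K)\circ g_\lambda=tI_{\lambda K}$ together with the $N$-homogeneity of $\mu$ shows that $\widetilde{\mu}_t$ is $N$-homogeneous on $\K^N$. By Hadwiger's Theorem \ref{VTP} (the polytopal $N$-homogeneous classification), $\widetilde{\mu}_t|_{\Part^N}=c(t)\V_N$ for some $c(t)\in\R$, and since every $K\in\K^N$ is the decreasing Hausdorff limit of polytopes, continuity from above extends this to $\mu(tI_K)=c(t)\V_N(K)$ for all $K\in\K^N$ and all $t>0$. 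In particular $\mu(tI_K)=0$ whenever $\dim K<N$. To propagate this to arbitrary $f\in\C^N$ with $\dim(\supp f)<N$, I will follow verbatim the approximation scheme of Proposition \ref{p1}: first handle simple functions $f=t_1I_{K_1}\vee\cdots\vee t_m I_{K_m}$ by induction on $m$ (using the valuation identity and the fact, computed in that proof, that the relevant meet reduces to a term of the form $t_jI_{K_{n+1}}$ with $\dim K_{n+1}<N$); then bounded-support $f$ via the dyadic approximants $f_i=\bigvee_j t_jI_{L_{t_j}(f)}$; then the general case via $fI_{B_i}\nearrow f$. This yields the simplicity of $\mu$.

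For evenness, define $\nu(f):=\mu(f\circ h)-\mu(f)$ where $h(x)=-x$. Since $f\mapsto f\circ h$ preserves $\C^N$, sends monotone sequences to monotone sequences, and is compatible with $\vee,\wedge$, the functional $\nu$ is a continuous translation invariant valuation on $\C^N$. For any polytope $P$ and $t>0$, $(tI_P)\circ h=tI_{-P}$, so
$$
\nu(tI_P)=\mu(tI_{-P})-\mu(tI_P)=c(t)\V_N(-P)-c(t)\V_N(P)=0,
$$
and Proposition \ref{p1} forces $\nu\equiv 0$. Hence $\mu\in\Val^+(\C^N)$, and Theorem \ref{aggiunta 2} applies directly, delivering the function $c$ with the stated properties and the integral representation; uniqueness of $c$ is also inherited from that theorem. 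The only genuinely delicate point here is that $\widetilde{\mu}_t$ is continuous only from above on $\K^N$, not fully continuous, so I cannot invoke the full Hadwiger classification; the workaround is to use the polytopal Theorem \ref{VTP} and push it to all of $\K^N$ by one-sided approximation, which is the step requiring the most care.
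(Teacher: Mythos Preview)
Your argument is correct, but it takes a different organizational route from the paper. The paper does not first establish that $\mu$ is simple and even in order to invoke Theorem~\ref{aggiunta 2} as a black box; instead it simply reruns the proof of Theorem~\ref{aggiunta 2} from scratch, replacing the appeal to Theorem~\ref{TK.2} (Klain's theorem for simple, even valuations) by an appeal to Theorem~\ref{TH} (the $N$-homogeneous classification). In both approaches the pivotal identity is the same, namely $\mu(tI_K)=c(t)\V_N(K)$; the paper reaches it in one stroke from $N$-homogeneity, whereas you reach it, then use it twice more (once to get simplicity via the approximation scheme of Proposition~\ref{p1}, once to get evenness via $\V_N(-P)=\V_N(P)$) before handing everything to Theorem~\ref{aggiunta 2}.

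What your route buys: you make explicit the structurally interesting fact that every $\mu\in\Val_N(\C^N)$ is automatically simple and even, and you get to reuse Theorem~\ref{aggiunta 2} verbatim rather than reproduce its proof. You are also more careful than the paper on one point: Theorem~\ref{TH} as stated requires full Hausdorff continuity, while $\widetilde\mu_t$ is only known to be continuous along decreasing sequences; your workaround via Theorem~\ref{VTP} on $\Part^N$ followed by decreasing polytopal approximation is exactly the right patch, and in effect fills a small gap that the paper's one-line proof leaves implicit. What the paper's route buys is brevity: once one accepts (or proves) the decreasing-continuity variant of Theorem~\ref{TH}, no detour through simplicity and evenness is needed.
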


\begin{proof}
The same argument as in the proof of Theorem \ref{aggiunta 2} can be used, where Theorem \ref{TK.2} has to be replaced by Theorem \ref{TH}.
\end{proof}

\noindent
We conclude this section with a remark about $0$-homogeneous valuations $\mu \in \Val_{0}(\C^N)$, which we are able to characterize them.

In this case, we have $$\mu(f \circ g_{\lambda})=\mu(f)$$ for all $f\in \C^N$ and $\lambda>0$.
As in the $N$-homogeneous case, we first look to the valuation defined on $\Part^N$. We set, for $t>0$,
$$\tilde{\mu}_{t}\colon \Part^N\rightarrow \R,\ \tilde{\mu}_{t}(P)=\mu(tI_{P}).$$
We observe that $\tilde{\mu}_{t}$ is a $0$-homogeneous, translation invariant valuation, then (by \cite[p. 353]{Schneider}) we are able to say that $\tilde{\mu}_{t}$ is constant on $\Part^N$, 
$$
\tilde{\mu}_{t}(P)\equiv \tilde{\mu}_{t}(\lbrace 0\rbrace)
$$ 
for all $P\in \Part^N$.
So we obtain $\mu(tI_{P})=\mu(tI_{\lbrace0\rbrace})$, i.e.\ it is equal to a function depending only on $t$:
$$
\mu(tI_{P})=\phi(t)
$$ 
for all $t>0$ and $P\in \Part^N$.
In particular, we have that $\phi\colon \R\rightarrow \R$ is continuous, and 
$$
\mu(tI_{P})=\phi(M(tI_{P}))
$$ where $M(tI_{P})=\max_{\R^N}tI_{P}$.

We define now $\overline{\mu}(f)=\phi(M(f))$ for every $f\in\C^N$. This is a continuous, translation invariant valuation on $\C^N$ 
(see Remark \ref{zero omogeneo}). Applying Proposition $\ref{p1}$ to
$\tilde{\mu}:=\mu-\overline{\mu}$, we have $\tilde{\mu}\equiv 0$, hence $\mu(f)=\phi((f))$.

\section{The Klain function on $\Val({\cal C}^N)$}

Let $\mu\in \Val_{k}({\cal C}^N)$ with $k\in\lbrace1,\dots,N-1\rbrace$. 
We fix $E\in \Gr(N,k)$ and we define 
$$
\C(E)=\{f\in \C^N:\supp(f)\subset E \}.
$$ 
After fixing a coordinate system on $E$ we can identify $\C(E)$ as $\C^k$, so we have $\mu|E\in \Val_{k}(\C^k)$.

Applying Theorem $\ref{TH2}$ we deduce that there exists a function $c_E\colon(0,+\infty)\rightarrow \R$, depending on $E$, such that
$$
\mu(f)=\int_{0}^{+\infty}c_{E}(t)dS_{k}(f;t)
$$
for every $f\in H$. Moreover $c_E$ is continuous and there exists $\delta>0$ such that $c_E(t)=0$ for all $t\in [0,\delta]$.

Hence we can define a function, $\KL_{\mu}$, that we will call the {\em Klain function of $\mu$}, as follows:
$$
\KL_{\mu}\colon(0,+\infty)\times \Gr(N,k)\rightarrow\mathbb{R},\ \KL_{\mu}(t,E)=c_{E}(t).
$$
This is equivalent to the identity:
\begin{equation}\label{e1}
\mu(f)=\int_{0}^{+\infty}\KL_{\mu}(t,E)dS_{k}(f;t)
\end{equation}
for all $f\in \C^N$ such that $\supp(f)\subset E$.

We choose now $f=tI_{K}\in \C^N$, where $t>0$ and $K$ is a convex body contained in $E$.
By Remark $\ref{r1}$ we obtain
$$
\mu(tI_{K})=\KL_{\mu}(t,E)\ \Vol_{k}(K).
$$
We have proved the following proposition.

\begin{proposition}
If $\mu\in \Val_{k}(\C^N)$, then for all $E\in \Gr(N,k)$ and $t>0$, there exists a unique real number $c_{t,E}$ such that
$$\widetilde{\mu}_{t}|E=c_{t,E}\Vol_{k}.$$
In particular, $c_{t,E}$ is the Klain function of $\mu$ evaluated at $(t,E)$.
\end{proposition}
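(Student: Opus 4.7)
My plan is to deduce this proposition essentially as a direct consequence of Theorem \ref{TH2} combined with the earlier computation of the measure $S_k(tI_K;\cdot)$ that was recorded in Remark \ref{r1}. The point is that the statement has already been set up in the paragraphs immediately preceding it, so the proof amounts to evaluating the integral representation on the test functions $f = tI_K$.

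First I would verify carefully that the restriction $\mu|_{\C(E)}$, after identifying $\C(E)$ with $\C^k$ by fixing a coordinate system on $E$, belongs to $\Val_k(\C^k)$. Continuity, translation invariance, and $k$-homogeneity are inherited from $\mu$; the valuation property survives because if $f,g\in\C(E)$ and $f\vee g\in\C^N$, then automatically $f\vee g$ and $f\wedge g$ are supported in $E$. Then Theorem \ref{TH2} applies and yields a unique continuous function $c_E\colon[0,\infty)\to\R$, vanishing on some $[0,\delta]$, such that
\begin{equation*}
\mu(f)=\int_{(0,\infty)} c_E(t)\,dS_k(f;t)\qquad\text{for every } f\in\C^N \text{ with } \supp(f)\subseteq E.
\end{equation*}

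Next I would specialize to $f=tI_K$ with $t>0$ and $K\in\K^N$ contained in $E$. As noted in Remark \ref{r1}, for such $f$ the associated measure is $S_k(tI_K;\cdot)=V_k(K)\,\delta_t$, where $\delta_t$ is the Dirac mass at $t$. Substituting into the representation formula gives
\begin{equation*}
\widetilde{\mu}_t(K)=\mu(tI_K)=\int_{(0,\infty)} c_E(s)\,d\bigl(V_k(K)\delta_t\bigr)(s)=c_E(t)\,V_k(K)=c_E(t)\,\Vol_k(K),
\end{equation*}
since $V_k$ coincides with the $k$-dimensional Lebesgue measure on $\K(E)$. Setting $c_{t,E}:=c_E(t)$ therefore yields the desired identity $\widetilde{\mu}_t|_E=c_{t,E}\Vol_k$, and by the very definition of the Klain function recalled just before the proposition we have $c_{t,E}=\KL_\mu(t,E)$.

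For uniqueness, if $c'_{t,E}$ were another scalar with $\widetilde{\mu}_t|_E=c'_{t,E}\Vol_k$, evaluating on any convex body in $E$ with positive $k$-dimensional volume (for example, a $k$-dimensional ball) would force $c'_{t,E}=c_{t,E}$. I do not foresee a genuine obstacle here: the only subtlety worth spelling out is the canonical identification $\C(E)\cong\C^k$ and the fact that the restriction inherits all the required properties, so that Theorem \ref{TH2} is truly applicable.
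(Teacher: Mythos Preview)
Your proof is correct and follows exactly the paper's approach: the paper derives the proposition from Theorem \ref{TH2} applied to $\mu|_{\C(E)}$, then specializes the integral representation to $f=tI_K$ using the computation $S_k(tI_K;\cdot)=V_k(K)\delta_t$ from Remark \ref{r1}, just as you do. Your write-up is in fact slightly more detailed than the paper's, which simply presents the argument in the paragraphs preceding the proposition and then states ``We have proved the following proposition.''
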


\begin{proposition}\label{p3}
The map $\KL\colon\mu\mapsto \KL_{\mu}$ is injective on $\Val^{+}_{k}({\cal C}^N)$.
\end{proposition}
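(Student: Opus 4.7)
The plan is to reduce via linearity to showing that if $\mu\in\Val^{+}_{k}(\C^N)$ satisfies $\KL_\mu\equiv 0$, then $\mu\equiv 0$, and then to prove by induction on $j\in\{k,k+1,\dots,N\}$ the stronger claim that $\mu$ vanishes on every $f\in\C^N$ whose support is contained in some $E\in\Gr(N,j)$; the case $j=N$ then yields $\mu\equiv 0$.

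The base case $j=k$ is immediate from the defining identity \eqref{e1}: for $f$ with $\supp(f)\subset E\in\Gr(N,k)$, it reads $\mu(f)=\int_{0}^{\infty}\KL_\mu(t,E)\,dS_k(f;t)=0$.

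For the inductive step I would fix $F\in\Gr(N,j+1)$ with $k\le j<N$, identify $F$ with $\R^{j+1}$, and view the restriction $\mu|_{\C(F)}$ as an element of $\Val^{+}(\C^{j+1})$. The key preliminary check is that $\mu|_{\C(F)}$ is \emph{simple}: if $f\in\C(F)$ has $\dim\supp(f)\le j$, then, after a translation permitted by translation invariance, $\supp(f)$ lies in a $j$-dimensional linear subspace of $\R^N$, i.e.\ in some element of $\Gr(N,j)$, and the inductive hypothesis gives $\mu(f)=0$. Applying Theorem~\ref{aggiunta 2} in dimension $j+1$ then produces a continuous function $c_F$ with $c_F\equiv 0$ on some $[0,\delta]$ such that
\[
 \mu(f)=\int_{(0,\infty)}c_F(t)\,dS_{j+1}(f;t)\qquad\text{for every }f\in\C(F).
\]

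The step I expect to be the main (though ultimately mild) obstacle is converting this representation into vanishing, which I would handle by a homogeneity comparison. Since $L_t(f\circ g_\lambda)=\lambda L_t(f)$ and $V_{j+1}$ is $(j+1)$-homogeneous, one has $S_{j+1}(f\circ g_\lambda;\cdot)=\lambda^{j+1}S_{j+1}(f;\cdot)$, so the integral representation gives $\mu(f\circ g_\lambda)=\lambda^{j+1}\mu(f)$. On the other hand, the $k$-homogeneity of $\mu$ gives $\mu(f\circ g_\lambda)=\lambda^{k}\mu(f)$. Equating these for all $\lambda>0$ and using $k<j+1$ forces $\mu(f)=0$ for every $f\in\C(F)$, which closes the induction and hence the proof. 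The only potential pitfall along the way is bookkeeping of ``dimension of the support in $F$'' versus ``in $\R^N$'' when verifying simplicity, but since $F$ is a linear subspace both coincide up to translation.
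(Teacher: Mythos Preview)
Your argument is correct, but it differs substantially from the paper's proof. The paper proceeds much more directly: for each fixed $t>0$ the induced valuation $\widetilde\mu_t(K)=\mu(tI_K)$ on convex bodies is even, translation invariant and $k$-homogeneous, and its Klain function on $\Gr(N,k)$ is exactly $E\mapsto\KL_\mu(t,E)$; hence if $\KL_\mu=\KL_\sigma$ then $\widetilde\mu_t=\widetilde\sigma_t$ for every $t$ by Klain's injectivity theorem for valuations on $\K^N$ (Theorem~\ref{TU}), so $(\mu-\sigma)(tI_P)=0$ for all $t>0$, $P\in\Part^N$, and Proposition~\ref{p1} finishes the job. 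In other words, the paper transfers the question to $\K^N$ and invokes the classical Klain injectivity as a black box.

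Your route instead stays entirely within $\C^N$ and effectively reproves the dimension induction underlying Klain's theorem in this functional setting: you climb from $j=k$ to $j=N$, at each stage using Theorem~\ref{aggiunta 2} (simple even valuations on $\C^{j+1}$ are integrals against $S_{j+1}$) together with the mismatch between the $(j{+}1)$-homogeneity of that integral and the $k$-homogeneity of $\mu$. This is longer but arguably more self-contained, since it avoids quoting Theorem~\ref{TU} and relies only on the simple-valuation characterization already established in the paper (which itself rests on Theorem~\ref{TK.2}). The paper's argument is cleaner when one is willing to import the convex-body result wholesale; yours makes the inductive mechanism explicit and would generalize more readily if one wanted to track what happens on intermediate Grassmannians.
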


\begin{proof}
Let $\mu,\sigma\in \Val^{+}_{k}({\cal C}^N)$ such that $\KL_{\mu}=\KL_{\sigma}$ in $(0,+\infty)\times \Gr(N,k)$.
We fix $t>0$ and we consider $\widetilde{\mu}_{t}$ and $\widetilde{\sigma}_{t}$ on $\Part^N$. By Theorem $\ref{TU}$, we 
obtain $\widetilde{\mu}_{t}=\widetilde{\sigma}_{t}$ for all $t>0$.

We define now $\overline{\mu}\colon{\cal C}^N\rightarrow \R$ as $\overline{\mu}=\mu-\sigma$, and we observe that 
$\overline{\mu}\in \Val^{+}_{k}({\cal C}^N)$. Furthermore, for $f=tI_{P}$, we have 
$\overline{\mu}(tI_{P})=\mu(tI_{P})-\sigma(tI_{P})=\widetilde{\mu}_{t}(P)-\widetilde{\sigma}_{t}(P)=0.$
Then, by Proposition $\ref{p1}$, we have $\overline{\mu}=0$ in ${\cal C}^N$.
\end{proof}

Let $\mu\in \Val_{k}({\cal C}^N)$ be also rotation invariant, so $\KL_{\mu}(t,E)$ does not depend on $E$.
This follows immediately from the definition of the Klain function and the fact  
that for every $E,F \in \Gr(N,k)$ there exists a rotation $\phi\in \rot(N)$ such that $E=\phi F$.

In this case, we get the Klain function as a function of $t$, hence
$$
\KL_{\mu}(t,E)=\varphi(t)
$$
for $t\in (0,+\infty)$ and all $E\in \Gr(N,k)$.

We have the following characterization theorem.

\begin{thm}\label{t3}
If $\mu\in \Val_{k}({\cal C}^N)$ and it is also rotation invariant, then 
$$\mu(f)=\int_{0}^{+\infty}\varphi(t)dS_k(f;t),$$
where $\varphi(t)=\KL_{\mu}(t,E)$.
\end{thm}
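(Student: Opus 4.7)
The plan is to match $\mu$ with the integral functional
$$
\nu(f) := \int_{0}^{+\infty} \varphi(t)\,dS_{k}(f;t)
$$
and conclude $\mu = \nu$ via the injectivity of the Klain function established in Proposition \ref{p3}. First, since the antipodal map $h(x)=-x$ belongs to $\rot(N)$, the rotation invariance of $\mu$ automatically forces $\mu(f\circ h)=\mu(f)$; hence $\mu \in \Val^{+}_{k}(\C^N)$, placing us in the regime where Proposition \ref{p3} applies.

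Second, I would verify that $\nu$ itself lies in $\Val^{+}_{k}(\C^N)$. By the construction of the Klain function via Theorem \ref{TH2} applied to $\mu|_{E}$ for any $E\in \Gr(N,k)$, the function $\varphi$ is continuous on $[0,+\infty)$ and vanishes on some interval $[0,\delta]$ with $\delta>0$. The integral-valuation proposition in Section 3.2 then tells us that $\nu$ is a well-defined, continuous, translation invariant valuation on $\C^N$. Evenness follows from the invariance of the intrinsic volume $V_{k}$ under reflection, which gives $S_{k}(f\circ h;\cdot)=S_{k}(f;\cdot)$. For $k$-homogeneity, use the identity $L_{t}(f\circ g_{\lambda})=\lambda L_{t}(f)$, so that the volume function scales as $u_{k}(t;f\circ g_{\lambda})=\lambda^{k}u_{k}(t;f)$; consequently $S_{k}(f\circ g_{\lambda};\cdot)=\lambda^{k}S_{k}(f;\cdot)$ and $\nu(f\circ g_{\lambda})=\lambda^{k}\nu(f)$.

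Third, I would compute $\KL_{\nu}$. For any $E\in \Gr(N,k)$ and any $f\in \C^N$ with $\supp(f)\subset E$, the defining formula for $\nu(f)$ is already in the integral form appearing in Theorem \ref{TH2} with the function $t\mapsto \varphi(t)$. By the uniqueness statement of that theorem (applied to $\nu|_{E}$), one has $\KL_{\nu}(t,E)=\varphi(t)=\KL_{\mu}(t,E)$ for every $t>0$ and every $E$. Proposition \ref{p3} then yields $\mu=\nu$, which is the desired formula.

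I do not foresee a serious obstacle: all the heavy lifting is already contained in Proposition \ref{p3}, whose proof routes through Theorem \ref{TU} and Proposition \ref{p1}. The only conceptual point worth flagging is that rotation invariance of $\mu$ buys evenness, and it is precisely this evenness that makes the Klain-type uniqueness available.
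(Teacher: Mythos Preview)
Your proposal is correct and follows essentially the same route as the paper: define the integral functional, note it lies in $\Val^{+}_{k}(\C^N)$ with the same Klain function as $\mu$, and invoke Proposition~\ref{p3}. You are simply more explicit than the paper about why evenness holds (via $-I\in\rot(N)$) and why the integral functional is $k$-homogeneous and even, points the paper leaves implicit.
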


\begin{proof}
We observe that, by the property of the Klain function, the quantity 
$$\int_{0}^{+\infty}\varphi(t)dS_k(f;t)$$ is a continuous, rigid motion and $k$-homogeneous valuation on 
$\C^N$.

\noindent
We conclude the proof applying Proposition $\ref{p3}$ to 
$\mu(\cdot)-\int_{0}^{+\infty}\varphi(t)dS_k(\cdot\ ;t)$.
\end{proof}

\bigskip

A. Colesanti and N. Lombardi:\\ 
Dipartimento di Matematica e Informatica ``U.Dini'', \\
Viale Morgagni 67/A, 50134, Firenze, Italy

\vspace{0.25cm}

L. Parapatits:\\
Vienna University of Technology, Institute of Discrete Mathematics and Geometry\\
Wiedner Hauptstra\ss e 8--10/104, 1040 Wien, Austria

\vspace{0.5cm}

Electronic mail addresses:\\
andrea.colesanti@unifi.it\\
nico.lombardi@unifi.it\\
lukas.parapatits@alumni.tuwien.ac.at

\end{document}